\documentclass[12pt]{amsart}
\usepackage{amsmath}
\usepackage{amsthm}
\usepackage{thmtools}
\usepackage{amssymb}
\usepackage[dvipsnames,svgnames,table]{xcolor}
\usepackage[paper=a4paper,left=25mm,right=25mm,top=25mm,bottom=25mm]{geometry}
\usepackage[colorlinks=true,linkcolor=RoyalBlue,citecolor=PineGreen,urlcolor=RoyalBlue]{hyperref}
\usepackage[nameinlink]{cleveref}
\usepackage[
    backend=biber,
    maxnames=4,
    maxalphanames=4,
    style=alphabetic,
    backref
  ]{biblatex}

\usepackage{tikz}
\usetikzlibrary{calc}

\crefformat{equation}{#2(#1)#3}
\Crefformat{equation}{#2(#1)#3}
  
\newcommand{\Z}{\mathbb Z}
\newcommand{\F}{\mathbb F}

\newtheorem{lemma}{Lemma}
\newtheorem{theorem}{Theorem}

\newtheorem{proposition}{Proposition}

\title{Large solution-free sets via combinatorial degenerations}
\author{Paul Hametner}
\address{Institute of Science and Technology, Austria}
\email{paul.hametner@ista.ac.at}

\author{J\'ozsef Solymosi}
\address{Department of Mathematics, University of British Columbia, Vancouver, BC, Canada; and \'Obuda University, Budapest, Hungary}
\email{solymosi@math.ubc.ca}

\begin{document}

\begin{abstract}
    Consider the linear form \[L(x,y,z,w) = 3x + y - 2z-2w.\]
    For a positive integer $N$, denote by $r_L(N)$ the largest size of a subset of $\{1,2,\dots,N\}$ that avoids nontrivial solutions to $L = 0$. We show that $r_L(N) = \Omega(N^{0.5608687})$, improving the lower bound for Problem 16 in Green's list of open problems. Our proof uses the method of combinatorial degenerations to turn a finite certificate into large solution-free sets. In fact, we can improve Ruzsa's lower bound of $N^{1/2-o(1)}$ for many four-variable equations. Consider
    \[L(x,y,z,w) = ax+by-cz-dw\]
    with $a,b,c,d\in \Z_{>0}$, $a+b=c+d$, $\{a,b\} \neq \{c,d\}$ and $abcd$ not a square. We show that there is $\varepsilon_L > 0$ such that $r_L(N) = \Omega_L(N^{1/2 + \varepsilon_L})$. Furthermore, we show that for primitive translation-invariant linear forms $L$ in $s$ variables, the lower bound $\Omega_s(N^{1/(s-1)})$ coming from a greedy construction is never optimal.
\end{abstract}

\maketitle

\section{Introduction}
A central paradigm in additive combinatorics is the study of sets of integers, or, more generally, subsets of abelian groups, that do not have solutions to a linear equation. In this paper, we are interested in finding large solution-free sets. Consider a translation-invariant linear form $L$, that is,
\begin{align}\label{align:L_def}
L(x_1,\dots,x_s) = c_1x_1+\dots + c_s x_s,
\end{align}
with $c_1,\dots,c_s \in \Z\setminus \{0\}$ and $c_1+\dots + c_s = 0$. Let $G$ be an abelian group. For a solution $(x_1,\dots, x_s) \in G^s$ to $L=0$, consider the partition $\mathcal T$ of $[s]$ with the property that $i, j \in [s]$ are in the same block of $\mathcal T$ if and only if $x_i = x_j$. We call $(x_1,\dots,x_s)$ a \emph{trivial} solution to $L=0$ if $\mathcal T$ satisfies
\begin{align}\label{align:coeff_partition}
    \sum_{i \in T} c_i = 0, \quad \text{for all } T \in \mathcal T,
\end{align}
and \emph{nontrivial} otherwise. Note that here we interpret the coefficients $c_i$ as endomorphisms of $G$, so that this notion of triviality depends on $G$. In \cite{RuzsaSolving}, Ruzsa started the systematic study of how large subsets of $[N] := \{1,2,\dots,N\}$ avoiding nontrivial solutions to $L=0$ can be. We call a set $A \subseteq G$ $L$-free if all solutions $(x_1,\dots,x_s)\in A^s$ to $L=0$ are trivial, and we let
\[r_L(N) = \max\{|A| : A \subseteq [N] \text{ is $L$-free}\}.\]
Up to a factor of $N^{o(1)}$, the current best upper bound for $r_L$ is Ruzsa's genus bound. Define the genus of $L$ to be the largest number of blocks of a partition $\mathcal T$ with property \Cref{align:coeff_partition} and denote it by $g$. Ruzsa showed that
\footnote{For functions $f,g: \Z_{> 0} \to \Z_{> 0}$, we write $f = O(g)$, equivalently $g = \Omega(f)$, if $f \leq Cg$ for a constant $C > 0$. Parameters that $C$ might depend on may appear in the subscript of $O$ and $\Omega$. A quantity that goes to zero as $N\to \infty$ is denoted by $o(1)$.}
\[r_L(N) = O_L(N^{1/g})\]
and the central question in the area is whether $r_L(N) \geq N^{1/g-o(1)}$. The simplest forms $L$ for which the exponents in the best-known upper and lower bounds for $r_L(N)$ do not match are four-variable translation-invariant forms 

\begin{align}\label{align:four_vars_form}
    L_4(x,y,z,w) =ax+by-cz-dw,
\end{align}

with $a,b,c,d\in \Z_{>0}$ and $\{a,b\} \neq \{c,d\}$. For these, Ruzsa used a parabola mod $p$ to show that $r_{L_4}(N) \geq N^{1/2 - o(1)}$ and, when $abcd$ is not a square, $r_{L_{4}}(N) = \Omega(\sqrt N)$. The form 
\[L_3(x,y,z,w) = 3x+y-2z-2w\]
has attracted particular interest. Ruzsa \cite{RuzsaSolving} asked if one can get a better lower bound than $\Omega(\sqrt N)$ for $r_{L_3}(N)$, and the question also appears in \cite{green100problems} as Problem 16.

\begin{theorem}\label{theorem:3x+y=2z+2w}
    Let $L_3(x,y,z,w) = 3x+y-2z-2w$. Then 
    \[r_{L_3}(N) \geq N^{\alpha-o(1)}, \quad \text{where } \alpha =\frac{\log15}{3\log 5} > 0.5608687.\]
\end{theorem}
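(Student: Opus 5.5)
The plan is to use the method of combinatorial degenerations: find a finite certificate on digits, then amplify it by tensor powers. The exponent $\alpha=\frac{\log 15}{\log 125}$ suggests the base object. It is a set $D\subseteq\{0,1,\dots,124\}$ of $15$ admissible base-$125$ digits, equivalently $15$ words in $\{0,\dots,4\}^3$ written in base $5$. We then look for $A\subseteq[N]$ consisting of integers whose base-$125$ digits all lie in $D$, with an $N^{o(1)}$ loss. First note what triviality means for $L_3$. The only partition of $\{x,y,z,w\}$ with zero coefficient sums is the one-block partition, since no proper subset of $\{3,1,-2,-2\}$ sums to $0$. So we must exclude every solution other than $x=y=z=w$.

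\emph{Step 1: digit-level model with carries.} If $x,y,z,w$ have digits $x_i,y_i,z_i,w_i\in D$, then $3x+y-2z-2w=0$ holds if and only if there are carries $c_0=0,c_1,\dots,c_k=0$ with
\[3x_i+y_i-2z_i-2w_i+c_i=125\,c_{i+1}\quad\text{for all } i.\]
The carries are bounded integers, lying in a fixed finite range $C$ since the coefficients are bounded, so we get a finite set of \emph{local transitions} $(c,x_i,y_i,z_i,w_i,c')$. Call a transition diagonal if $c=c'=0$ and $x_i=y_i=z_i=w_i$.

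\emph{Step 2: the certificate.} We look for real weights $u_x,u_y,u_z,u_w\colon D\to\mathbb R$ and a potential $\psi\colon C\to\mathbb R$ with the following property. For every admissible transition,
\[u_x(x_i)+u_y(y_i)+u_z(z_i)+u_w(w_i)+\psi(c')-\psi(c)\ \ge\ 0,\]
with equality exactly for the diagonal transitions. We also require $u_x+u_y+u_z+u_w\equiv 0$ on the diagonal. This is a finite linear feasibility problem, together with the combinatorial choice of $D$. I expect to find both by computer search, and this is the main obstacle: one must pick the $15$ digits so that the non-diagonal transitions admit a strictly positive degeneration. If a plain linear certificate fails, the first fallback is to allow lexicographic (vector-valued) weights.

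\emph{Step 3: amplification.} Let $A_k$ be the set of integers with $k$ base-$125$ digits in $D$ and a prescribed digit type, meaning each $d\in D$ occurs exactly $k/15$ times. Fixing the type costs only a factor $\mathrm{poly}(k)$ in $|A_k|=15^{k}/k^{O(1)}$. For a solution in $A_k^4$, sum the local inequality over all positions. The $\psi$ terms telescope to $\psi(0)-\psi(0)=0$. Because all four numbers have the same type, $\sum_i u_x(x_i)$ depends only on that type, and the same holds for $y,z,w$. The total $\sum_i\bigl(u_x(x_i)+u_y(y_i)+u_z(z_i)+u_w(w_i)\bigr)$ therefore equals $\frac{k}{15}\sum_{d\in D}(u_x+u_y+u_z+u_w)(d)=0$, using that $u_x+u_y+u_z+u_w$ vanishes on the diagonal. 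So every local inequality must be an equality, every transition is diagonal, and hence $x=y=z=w$.

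\emph{Step 4: conclusion.} This gives $L_3$-free sets of size $15^k k^{-O(1)}$ inside $[125^k]$. Interpolating over $N$ yields $r_{L_3}(N)\ge N^{\log 15/\log 125-o(1)}=N^{\alpha-o(1)}$. Finally, $\alpha>0.5608687$ is a numerical check.
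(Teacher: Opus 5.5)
\paragraph{Verdict.} Steps 1, 3 and 4 are a valid reduction, but Step 2, which carries the whole content of the theorem, is missing. Suppose a $15$-element digit set $D$ with a carry-aware certificate $(u_x,u_y,u_z,u_w,\psi)$ existed. Then your type-fixing and telescoping argument would force every transition to be diagonal. It would even give a polylogarithmic loss instead of the paper's $\exp(-O(\sqrt{\log N\log\log N}))$.

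\paragraph{The gap.} You never exhibit $D$ or the weights; you only expect a computer search to find them. Nothing guarantees that the object you are searching for exists. Your certificate must also handle transitions with nonzero carries. For example, a position where $3x_i+y_i-2z_i-2w_i=125$ followed by one where it equals $-1$ forces the two $u$-weights to have strictly positive sum.

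The certificate the paper actually uses does not supply this. It is a $15$-point set $A\subseteq\F_5^3$ with certificate functions $f_1=f_3=f_4=f$, $f_2=-3f$, for an explicit table of values of $f$. It only controls $4$-tuples that solve $L_3=0$ in each of the three coordinates mod $5$ separately. Encoding $(a_1,a_2,a_3)$ as the base-$125$ digit $\widetilde a_1+5\widetilde a_2+25\widetilde a_3$ creates carries between the three coordinates. So the $\F_5^3$ certificate does not transfer directly to your setting.

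\paragraph{The missing idea.} Do all the amplification in a group, where there are no carries, and deal with carries only once at the end, where no potential is needed.
\begin{enumerate}
\item By \Cref{lemma:L-seed}, slice $A^\ell\subseteq(\F_5^3)^\ell$ by the values of $\sum_j f_i(x_j)$; fixing types works equally well. This gives a genuinely $L_3$-free set $X\subseteq\Z_5^{3\ell}$ with $|X|\ge 15^\ell/(1+2\ell K)^3$.
\item \Cref{lemma:project_to_integers} lifts $X$ to $\Z$ by using $d$ independent elements of $X$ as base-$5$ digit vectors. At the lowest position whose digit tuple is not diagonal, all lower positions contribute exactly $0$. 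So there is no carry-in, and that tuple is a nontrivial solution mod $5$ in $X$, a contradiction. This only needs $L_3$ to have the same coefficient partition mod $5$ as over $\Z$, which holds because no proper nonempty subset of $\{3,1,-2,-2\}$ sums to $0$ mod $5$.
\item Pack the $3\ell$ coordinates in a base $M>c_L(5^d-1)$, so that no carries occur between them.
\item Take $\ell\approx\sqrt{\log N\log\log N}$ to get $N^{\alpha-o(1)}$.
\end{enumerate}
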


The proof of \Cref{theorem:3x+y=2z+2w} relies on a finite certificate of $15$ points in $\F_5^3$, which is converted into large $L_3$-free sets in $[N]$ using \Cref{theorem:main} in \Cref{section:degenerations}. The main ingredients of \Cref{theorem:main} are degenerations, which trace back to Strassen \cite{Strassen1987}. Instead of the $M$-degenerations that Strassen introduces for tensors, we use the closely related combinatorial degenerations introduced in \cite[Definition 15.29]{algebraicComplTheory}. In \cite{degenerations_corners}, combinatorial degenerations are used to lower-bound the Shannon capacity of directed hypergraphs, which is one way to see our degeneration lemma, \Cref{lemma:L-seed}. For notational convenience, we use the word $L$-seed instead of combinatorial degeneration. 

\medskip

\Cref{theorem:main} works more generally for primitive forms. A linear form $L$ as in \Cref{align:L_def} is primitive if there is a unique finest partition with the property \Cref{align:coeff_partition}. The key property of primitive forms $L$ is that $L$-freeness is preserved under taking Cartesian products, that is, if $A$ and $B$ are $L$-free, then so is $A \times B$. We will use this observation throughout the paper.

\medskip

Furthermore, we are able to improve Ruzsa's parabola construction for every four-variable form $L_4$ as in \Cref{align:four_vars_form} with $abcd$ not a square. 
\begin{restatable}{theorem}{fourvars}\label{theorem:four_vars_form}
    Let $L_4(x,y,z,w) = ax+by-cz-dw$ be a translation-invariant form with $a,b,c,d \in \Z_{>0}$ and $abcd$ not a square. Then 
    \[r_{L_4}(N) = \Omega(N^{1/2+\varepsilon_{L_4}}),\quad \varepsilon_{L_4} > 0.\]
\end{restatable}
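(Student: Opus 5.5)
Consider first the statement of \Cref{theorem:four_vars_form}, which is missing the hypothesis $\{a,b\}\neq\{c,d\}$. Without it the claim is false in general. Take $L_4 = ax+by-ax-by$ with $a\neq b$, for example $2x+y-2z-w$. Here $abcd=a^2b^2$ is a square, so this particular case is excluded. But the abstract also assumes $a+b=c+d$ and $\{a,b\}\neq\{c,d\}$. I therefore prove the statement with these standing assumptions, noting that translation-invariance gives $a+b=c+d$. If $\{a,b\}=\{c,d\}$, then $abcd$ is automatically a square, so that case is excluded anyway.

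\medskip

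\textbf{Overall plan.} I would use the general machinery of \Cref{theorem:main}, which the paper announces. A finite $L_4$-seed (a combinatorial degeneration) over some finite group converts into an $L_4$-free set of size $N^{\beta-o(1)}$, where $\beta$ is governed by the ratio $\log|\text{seed}|/\log|\text{ambient}|$. The $\Omega$ without the $o(1)$ loss would then come from combining with Ruzsa's parabola via products, as I explain below.

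\textbf{Step 1: the baseline.} Take $p$ prime with $abcd$ a non-residue mod $p$; there are infinitely many such $p$ because $abcd$ is not a square. Let $P=\{(x,x^2)\}\subseteq\F_p^2$ be the parabola. Nontrivial solutions on $P$ reduce to a quadratic form whose discriminant is essentially $-abcd$ times a square. So $P$ is $L_4$-free, and it has density $p^{1}$ in $p^2$; this is Ruzsa's exponent $1/2$.

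\textbf{Step 2: beating $1/2$.} I would build an $L_4$-seed in $\F_p^2\times$ (a small extra coordinate, or a tensor power). The seed is a set $S$ that is not itself $L_4$-free. Instead, every nontrivial solution in $S^4$ is "degenerate": under a weight or ordering function $\phi$, it fails the strict inequality that genuine solutions must satisfy, in the style of Strassen. Concretely, I would take $S=P\times\{0,1\}$, or, in a lifted coordinate, a slightly thickened parabola. Each point gets a weight in $\Z$, and $\phi$ is chosen linear so that $a\phi(x)+b\phi(y)-c\phi(z)-d\phi(w)\ne0$ on every nontrivial solution. Adding a few extra points beyond $|P|$ gives ratio $\log(p+k)/\log(p^2)\cdot$ (adjusted) $>1/2$, hence $\varepsilon_{L_4}>0$. \Cref{theorem:main} then yields $r_{L_4}(N)\ge N^{1/2+2\varepsilon-o(1)}$, which is at least $N^{1/2+\varepsilon}$ for large $N$. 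This absorbs the $o(1)$ and gives the stated $\Omega$.

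\textbf{Main obstacle.} The hard step is exhibiting a seed that works uniformly for all admissible $(a,b,c,d)$. The added points create new solutions, and these must all be killed by the degeneration order. I would try to add points $(x,x^2+t)$ for a small set of shifts $t$. Solutions mixing shifts then satisfy the quadratic relation with a nonzero constant term, and I would order the shifts so that $at_1+bt_2-ct_3-dt_4$ has a consistent sign on every mixed solution. Showing that the number of surviving points strictly exceeds $p$ via the degeneration (a counting or probabilistic deletion argument) is the crux.
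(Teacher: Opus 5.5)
\textbf{Genuine gap: the key object is never constructed.} The theorem rests on one thing: an $L_4$-free set (or $L_4$-seed) of rate strictly above $1/2$. Your Step~1 only reproves Ruzsa's $\Omega(\sqrt N)$, and your ``Main obstacle'' paragraph leaves the better seed unconstructed. So nothing beyond exponent $1/2$ is established.

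The candidates you sketch also cannot work.
\begin{itemize}
\item \emph{The product $S=P\times\{0,1\}\subseteq\F_p^2\times G'$.} Its rate $\log(2p)/\log(p^2|G'|)$ is a weighted average of $1/2$ and $\log 2/\log|G'|$, so it exceeds $1/2$ only if $|G'|\le 3$. Since $P$ is $L_4$-free and $L_4$ has genus one, every solution in $S^4$ uses the same parabola point in all four positions. Hence all the work is done by $\{0,1\}\subseteq G'$ alone, and the parabola contributes nothing. Even that fails in general. If a two-point pattern $\pi$ and its complement $\bar\pi$ are both nontrivial solutions, they cannot both have positive certificate sum, because $\sum_i f_i(\pi_i)+\sum_i f_i(\bar\pi_i)=\sum_i\bigl(f_i(0)+f_i(1)\bigr)=0$. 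For $3x+y-2z-2w$ this happens with $(0,0,1,1),(1,1,0,0)$ in $\Z_2$ and with $(1,0,0,0),(0,1,1,1)$ in $\Z_3$.
\item \emph{The thickened parabola $\{(x,x^2+t)\}$.} On the hyperplane $ax+by=cz+dw$ one has $(a+b)(ax^2+by^2-cz^2-dw^2)=ab(x-y)^2-cd(z-w)^2$. So points with shifts $t_1,\dots,t_4$ form a solution iff $abu^2-cdv^2=-(a+b)(at_1+bt_2-ct_3-dt_4)$, where $u=x-y$ and $v=z-w$. A nondegenerate binary quadratic form over $\F_p$ represents every nonzero element. Hence for $t\neq t'$ both patterns $(t,t,t',t')$ and $(t',t',t,t)$ occur in nontrivial solutions, and on them $at_1+bt_2-ct_3-dt_4=\pm(a+b)(t-t')$. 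No ordering of the shifts gives this a consistent sign. By the same complementary-pattern identity, no certificate depending only on the shift exists.
\end{itemize}
A working certificate would have to depend essentially on the position along the parabola, and you give no mechanism for that. Note also that ``$\neq 0$'' is not the seed condition: you need $\sum_i f_i(x_i)>0$ on every nontrivial solution.

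\textbf{What the paper does instead.} No explicit seed is needed. Because $abcd$ is not a square, $L_4$ has genus one: a zero-sum partition into two blocks would force $\{a,b\}=\{c,d\}$, hence $abcd=(ab)^2$. So, contrary to your opening remark, no hypothesis is missing from the statement. The paper then applies the Hametner--Tyrrell theorem, \Cref{theorem:beatingProducts}: for a genus one equation over $\F_p$, any $L$-free $A\subseteq\F_p^n$ with $|A|=c^n$ is beaten by some $L$-free $B\subseteq\F_p^m$ with $m>n$ and $|B|>c^m$. Apply this to Ruzsa's parabola with $n=2$ and $c=\sqrt p$, choosing $p$ with $p\nmid a+b$, $abcd$ a non-residue, and the coefficient partition preserved mod $p$. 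This gives $|B|=p^{\alpha m}$ with $\alpha>1/2$. Then \Cref{lemma:project_to_integers} turns $B$ directly into $r_{L_4}(N)=\Omega(N^{\alpha})$, with no $o(1)$ loss and no use of \Cref{theorem:main}. The obstruction you ran into, that the parabola is not improved by products or simple perturbations, is exactly what that theorem overcomes. This is also why the resulting $\varepsilon_{L_4}$ is tiny and non-explicit.
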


In the proof, we do not use a finite certificate like the one used in the proof of \Cref{theorem:3x+y=2z+2w}, but rather apply a recent result of the first author and Tyrrell \cite{beatingProducts} to Ruzsa's parabola construction, vitally exploiting that $L_4$ has genus one. 

\medskip

For a general $s$-variable translation-invariant form $L(x_1,\dots,x_s) = \sum_{i = 1}^s c_i x_i$, Ruzsa observed a lower bound of $r_L(N) = \Omega_s(N^{1/(s-1)})$ using a greedy construction. If $L$ is primitive, the moment curve in $\F_p^{s-1}$ yields the same bound and, in fact, can be improved by taking the projective completion of the moment curve. 

\begin{theorem}\label{theorem:proj_compl_moment}
For every primitive $L$ with $s \geq 3$, there is $\varepsilon_L>0$ such that 
\[r_L(N) = \Omega_{L}\left( N^{\frac{1}{s-1}+\varepsilon_L}\right).\]
\end{theorem}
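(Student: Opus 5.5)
The plan is to build an $L$-free set in $\F_p^{s-1}$ with \emph{more} than $p$ points, and then feed it into \Cref{theorem:main} to obtain integer sets.

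\textbf{The finite certificate.} I would take the projective completion of the moment curve, i.e.\ the rational normal curve $C\subseteq\mathbb P^{s-1}(\F_p)$, which has $p+1$ rational points. Since $s-1\ge 2$, there is an irreducible polynomial of degree $s-1$ over $\F_p$. The corresponding hyperplane $H$ contains no $\F_p$-point of $C$, because such a point would be a root of that polynomial in $\F_p$. Hence all $p+1$ points of $C$ lie in the affine chart $\mathbb P^{s-1}\setminus H\cong\F_p^{s-1}$. Call this set $S$.

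\textbf{$S$ is $L$-free.} Any $s$ distinct points of a rational normal curve in $\mathbb P^{s-1}$ are projectively independent, by the Vandermonde argument. In an affine chart this means that any $s$ distinct points of $S$ are affinely independent. Now take a solution $(x_1,\dots,x_s)\in S^s$ of $L=0$ with partition $\mathcal T$. Grouping equal values gives
\[\sum_{T\in\mathcal T}\Bigl(\sum_{i\in T}c_i\Bigr)y_T=0,\]
where the $y_T$ are distinct, at most $s$ in number, and the coefficients sum to $0$. By affine independence every block sum vanishes mod $p$. Choosing $p>\sum_i|c_i|$, every block sum vanishes in $\Z$, so the solution is trivial. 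This is where I use that triviality is tested over $\Z$ once $p$ is large. Primitivity of $L$ is what lets me pass to products, and that is exactly what \Cref{theorem:main} needs.

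\textbf{Transfer to $[N]$.} I would then apply \Cref{theorem:main} to $S\subseteq\F_p^{s-1}$. A genuinely $L$-free set is in particular an $L$-seed, with the trivial degeneration. Theorem~\ref{theorem:main} should give
\[r_L(N)\ge N^{\frac{\log(p+1)}{(s-1)\log p}-o(1)}.\]
With $p$ fixed, the exponent is strictly larger than $1/(s-1)$. Taking $\varepsilon_L$ to be half the gap absorbs the $N^{o(1)}$ loss and yields the $\Omega_L$ form stated.

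\textbf{Main obstacle.} The delicate point is the transfer step: checking that the hypotheses of \Cref{theorem:main} are met, for instance that it accepts sets in $\F_p^{k}$ for an arbitrary prime $p$. If it needs an additional non-degeneracy or product-friendly condition, I would verify that condition for $S$ using the affine independence above. If \Cref{theorem:main} is only stated for a particular shape of certificate, I would fall back on the standard route of taking $S^n\subseteq\F_p^{(s-1)n}$, which is $L$-free by primitivity. I would then embed it into the integers via base-$Mp$ digit expansions with $M$ large, and handle carries by the same large-$p$ argument.
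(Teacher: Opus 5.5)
Your proposal is correct and follows the paper's proof. The paper likewise places the $p+1$ points of the projectively completed moment curve in the affine hyperplane $\sum_i a_i x_i = 1$ cut out by an irreducible polynomial of degree $s-1$, and deduces $L$-freeness from the fact that any $s$ of these points are linearly, hence affinely, independent. The only difference is the transfer step: since this set is already $L$-free, the paper applies \Cref{lemma:project_to_integers} directly (your fallback) and gets $\Omega_L\bigl(N^{\log(p+1)/((s-1)\log p)}\bigr)$ with no $N^{o(1)}$ loss. Your route through \Cref{theorem:main} with zero certificate functions incurs that loss, which you correctly absorb by halving the gap.
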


Under certain conditions, we can also prove that already the moment curve in $\F_p^{s-2}$ is $L$-free. Note that the following is a direct generalization of Ruzsa's parabola construction for four-variable $L_4$ as in \Cref{align:four_vars_form} with $abcd$ not a square.

\begin{theorem}\label{theorem:reduced_moment_even_s}
    Let $L$ be primitive and let $s$ be even. If 
    \[(-1)^{s/2} \prod_{i = 1}^s c_i\]
    is not a square, then 
    \[r_L(N) = \Omega_{L}( N^{\frac1{s-2}}).\]
\end{theorem}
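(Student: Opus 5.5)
The plan is to generalize Ruzsa's parabola construction. We show that the moment curve
\[\Gamma = \{(t,t^2,\dots,t^{s-2}) : t \in \F_p\} \subseteq \F_p^{s-2}\]
is $L$-free for suitable primes $p$, and then transfer it to $[N]$ by a standard digit embedding.

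\textbf{Choice of primes.} Set $D_L = (-1)^{s/2}\prod_i c_i$. Since $D_L$ is not a square in $\Z$, quadratic reciprocity together with Dirichlet's theorem gives a positive-density set of primes $p$ for which $D_L$ is a quadratic nonresidue mod $p$. We restrict to those $p$ that also exceed every $|\sum_{i\in T}c_i|$ with $T \subseteq [s]$ and $\sum_{i \in T} c_i \neq 0$. Then a block sum vanishes mod $p$ if and only if it vanishes in $\Z$, and no $c_i$ is divisible by $p$. By prime number theorem-type results in arithmetic progressions, such primes exist in every interval $[X,2X]$ for $X$ large, depending on $L$. This is what will give $\Omega_L$ rather than $N^{-o(1)}$ losses.

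\textbf{Freeness of the moment curve.} Suppose $t_1,\dots,t_s \in \F_p$ satisfy $\sum_i c_i t_i^k = 0$ for $k=1,\dots,s-2$. The case $k=0$ holds automatically by translation invariance. Let $u_1,\dots,u_m$ be the distinct values among the $t_i$, and let $C_j$ be the sum of the $c_i$ with $t_i = u_j$. Then $\sum_j C_j u_j^k = 0$ for $0\le k\le s-2$.
\begin{itemize}
\item If $m \le s-1$, the equations for $k=0,\dots,m-1$ form an invertible Vandermonde system, so every $C_j \equiv 0 \pmod p$. Hence $C_j = 0$ in $\Z$ by the choice of $p$, and the solution is trivial.
\item If $m=s$, all $t_i$ are distinct. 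The vector $(c_i)$ then lies in the kernel of the $(s-1)\times s$ Vandermonde matrix, which is one-dimensional and spanned by $w_i = \prod_{j\ne i}(t_i-t_j)^{-1}$. So $c_i = \lambda w_i$ with $\lambda \neq 0$.
\end{itemize}
In the second case, taking the product over all $i$ gives
\[\prod_i c_i = \lambda^s \Bigl(\prod_{i\ne j}(t_i-t_j)\Bigr)^{-1} = \lambda^s (-1)^{s(s-1)/2}\prod_{i<j}(t_i-t_j)^{-2}.\]
Since $s$ is even, $\lambda^s$ is a square and $(-1)^{s(s-1)/2} = (-1)^{s/2}$. Therefore $(-1)^{s/2}\prod_i c_i$ is a square mod $p$, contradicting the choice of $p$. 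I expect this sign and square bookkeeping to be the main point to get right, including the check that $s(s-1)/2 \equiv s/2 \pmod 2$ for even $s$. Everything else is routine.

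\textbf{Embedding into $[N]$.} Identify $\F_p$ with $\{0,\dots,p-1\}$ and set $M = 2\bigl(\sum_i |c_i|\bigr)p$. Map each point of $\Gamma$ to
\[x(t) = 1 + \sum_{k=1}^{s-2} t^k \bmod p \cdot M^{k-1},\]
where $t^k \bmod p$ denotes the least nonnegative residue. This map is injective, since the first digit is $t$ itself. If $\sum_i c_i x(t_i) = 0$ in $\Z$, then because $|\sum_i c_i d_i| < M/2$ for digits $d_i<p$, we get no carries. So each digit equation holds in $\Z$, and therefore mod $p$. Moreover, the equality pattern of the $x(t_i)$ coincides with that of the $t_i$. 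So the previous step shows the solution is trivial.

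\textbf{Conclusion.} We obtain an $L$-free set of size $p$ inside $[N]$ with $N = O_L(p^{s-2})$. Given any $N$, we pick an admissible $p$ of order $N^{1/(s-2)}$, which is possible by the density statement above. This yields $r_L(N) = \Omega_L(N^{1/(s-2)})$.
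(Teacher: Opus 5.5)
Your proposal is correct. The freeness argument for $\Gamma$ is the same as the paper's:
\begin{itemize}
\item a Vandermonde argument when some $t_i$ coincide;
\item the one-dimensional kernel spanned by $\prod_{j\neq i}(t_i-t_j)^{-1}$ when the $t_i$ are distinct;
\item the sign count $(-1)^{\binom s2}=(-1)^{s/2}$ for even $s$.
\end{itemize}

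Where you differ is the passage to $[N]$. The paper fixes one admissible prime $p$ and applies \Cref{lemma:project_to_integers}. That is, it takes $d$-fold base-$p$ expansions of elements of $\Gamma$, packs the $s-2$ coordinates in base $M$, and obtains $r_L(N)\ge N^{1/(s-2)}/(c_Lp)$. This needs only that a single suitable prime exists. However, it relies on primitivity, because $L$-freeness has to survive the product structure built into that lemma.

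You instead let $p\asymp N^{1/(s-2)}$ vary with $N$ and embed a single copy of $\Gamma$ by one digit expansion. The price is a heavier analytic input: admissible primes must exist in every dyadic interval. You correctly obtain these from the prime number theorem in progressions modulo $4|D_L|$.

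The gain is that your argument never uses primitivity. Requiring $p$ to exceed every nonzero block sum $\bigl|\sum_{i\in T}c_i\bigr|$ means the vanishing of block sums holds in $\Z$ directly. The map $t\mapsto x(t)$ is injective and so preserves the equality pattern. Hence your proof gives $r_L(N)=\Omega_L(N^{1/(s-2)})$ for every translation-invariant $L$ with $s$ even and $(-1)^{s/2}\prod_i c_i$ a nonsquare. This includes non-primitive forms such as $x_1+x_2-2x_3+x_4+x_5-2x_6$.
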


\medskip

The rest of the paper is organized as follows. First, we introduce the combinatorial degeneration method and prove \Cref{theorem:main} in \Cref{section:degenerations}. In \Cref{section:peeling}, we give the certificate giving the exponent in \Cref{theorem:3x+y=2z+2w} and we discuss how Behrend's construction and the peeling method can be seen as special cases of combinatorial degeneration. Solution-freeness of moment curves over $\F_p$ is discussed in \Cref{section:parabola_and_beyond}, where we prove \Cref{theorem:four_vars_form,theorem:proj_compl_moment,theorem:reduced_moment_even_s}.

\section{Combinatorial degenerations and solution-free sets}\label{section:degenerations}

Throughout the paper, we let $L(x_1,\dots,x_s) = c_1x_1 + \dots + c_s x_s$ be a translation-invariant linear form in $s\geq 3$ variables. If $L$ is primitive, the finest partition $\mathcal T$ of $[s]$ with the property 
\begin{align}\label{align:coeff_partition2}
    \sum_{i \in T} c_i = 0, \quad \text{for all } T \in \mathcal T,
\end{align}
is called the \emph{coefficient partition} of the primitive form $L$. Similarly, for an integer $m \geq 2$, we say that $L$ is \emph{primitive mod $m$} if there is a unique finest partition $\mathcal T$ so that \Cref{align:coeff_partition2} holds in $\Z_m = \Z/m\Z$. We refer to this $\mathcal T$ as the \emph{coefficient partition of $L$ mod $m$}.

\medskip

A subset $A$ of a finite abelian group $G$ is an $L$-seed if there are $f_1,\dots,f_{s} : A \to \Z$ such that 
\begin{align*}
    \sum_{i = 1}^{s} f_i(x_i) = 0 \quad &\text{for all trivial solutions } (x_1,\dots,x_s) \in A^s, \\
    \sum_{i = 1}^{s} f_i(x_i) > 0 \quad &\text{for all nontrivial solutions } (x_1,\dots,x_s) \in A^s.
\end{align*}
We call the $f_i$ the certificate functions for $A$. Note that, because of the triviality of the diagonal solutions $(x,x,\dots,x) \in G^s$, they always satisfy $f_1+\dots+f_s = 0$.

\begin{theorem}\label{theorem:main}
    Let $L$ be primitive over $\Z_m$ such that $L$ has the same coefficient partition over $\Z_m$ as over $\Z$ and let $A \subseteq \Z_m^r$ be an $L$-seed. Then 
    \[r_L(N) \geq N^{\frac1r\log_m|A|} \exp\left(-O_{A,L}\left(\sqrt{\log N \log \log N}\right)\right).\]
\end{theorem}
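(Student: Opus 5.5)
Our plan follows the standard combinatorial-degeneration (Behrend-type) template: take tensor powers of the seed, restrict to a level set of the certificate sum, and embed into the integers via base-$M$ digits with no carries.

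\textbf{Step 1: powers of the seed.} For $n\ge 1$ consider $A^n\subseteq (\Z_m^r)^n=\Z_m^{rn}$. We claim $A^n$ is again an $L$-seed, with certificate functions $F_i(x)=\sum_{j=1}^n f_i(x^{(j)})$. For a solution $(x_1,\dots,x_s)\in (A^n)^s$, each coordinate $j$ gives a solution $(x_1^{(j)},\dots,x_s^{(j)})\in A^s$. Since $L$ is primitive mod $m$ with the same coefficient partition as over $\Z$, a tuple is trivial exactly when its equality partition is coarser than the coefficient partition $\mathcal T$. Hence the solution in $A^n$ is trivial iff every coordinate solution is trivial (equality in the product is the intersection of the coordinatewise equality partitions). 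Therefore $\sum_i F_i(x_i)$ equals $0$ on trivial solutions and is $\ge 1$ on nontrivial ones.

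\textbf{Step 2: level sets.} Write $\mathcal T$ for the coefficient partition, pick one index $i_T\in T$ for each block $T$, and define, for $x\in A^n$, the vector $\Phi(x)=(F_1(x),\dots,F_s(x))\in\Z^s$. Its entries are bounded by $Cn$. Pigeonhole over the $O(n)^{s}$ values gives a fiber $B\subseteq A^n$ with $|B|\ge |A|^n/(Cn)^s$ on which $\Phi$ is constant, say $F_i\equiv v_i$. Since $f_1+\dots+f_s=0$, we have $\sum_i v_i=0$. So for any solution in $B^s$, $\sum_i F_i(x_i)=\sum_i v_i=0$, and by Step 1 every solution in $B^s$ is trivial. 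Thus $B$ is an $L$-free subset of $\Z_m^{rn}$, with $|B|=|A|^{n}n^{-O(1)}$.

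\textbf{Step 3: lifting to $[N]$.} We identify $\Z_m$ with $\{0,\dots,m-1\}$ and map $x\in\Z_m^{rn}$ to $\iota(x)=\sum_k x_k M^{k}$, with base $M=Km$ and $K>\sum|c_i|$. A solution of $L=0$ among the integers $\iota(x_i)$ then has no carries, so it holds digitwise over $\Z$. It therefore reduces to a solution mod $m$ in each coordinate, hence to a solution in $B^s$, which is trivial. Triviality over $\Z$ follows because equality of the integers is equivalent to equality of the vectors, and the coefficient partitions agree. This gives $r_L(N)\ge |B|$ with $N\approx (Km)^{rn}$, but the exponent $\log|A|/(r\log(Km))$ is lossy.

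\textbf{Main obstacle.} The difficulty is removing the factor $K$ to reach the exponent $\frac1r\log_m|A|$ with an $\exp(-O(\sqrt{\log N\log\log N}))$ error. We would handle it Behrend-style: group the coordinates into $d$ blocks, each block a number in $[m^{b}]$ with $rn=db$, and use the base $M=Km^{b}$. A solution over $\Z$ of the digit-integers then holds blockwise. Reducing each block modulo $m^b$ gives a solution in $\Z_{m^b}$, which is not the same as a solution coordinatewise in $\Z_m^b$. We would therefore instead work with $\Z_{m^b}$ carries explicitly. The alternative we would try first is to further restrict to a fiber of the carry pattern within each block: carries between digits of $\sum c_i x_{i,k}$ lie in a bounded range $[-K,K]$. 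For each fixed carry vector $e\in[-K,K]^{b}$, the digit equation is $L(x_{\cdot,k})=e_k m - e_{k-1}$. Choosing an affine sublattice or translating handles these patterns, at a cost of $(2K)^{d}$ per block, i.e.\ $K^{d}$ overall. Balancing $d$ and $b$ with $b\approx\sqrt{n\log n}$ gives total loss $\exp(O(\sqrt{\log N\log\log N}))$, matching the statement. The delicate point we expect is showing that, within a fixed carry pattern, only the genuine mod-$m$ solutions survive, so that the seed property still forces triviality.
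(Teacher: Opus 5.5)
Your Steps 1 and 2 are correct and amount to the paper's slicing lemma (\Cref{lemma:L-seed}); fixing all $s$ sums instead of $s-1$ is harmless. The gap is in Step 3. As you note, the base-$Km$ embedding only gives the exponent $\log|A|/(r\log(Km))$, and the repair you sketch cannot work.

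A carry vector is determined by an $s$-tuple of elements, not by a single element. So there is no subset of $B$ you can pass to on which all solutions have a prescribed carry pattern. For a nonzero pattern, the digit equations read $L(x_{\cdot,k})\equiv -e_{k-1}\pmod m$. These are inhomogeneous, and neither the seed property nor the $L$-freeness of $B$ says anything about them. The ``delicate point'' that only genuine mod-$m$ solutions survive is therefore not delicate but false in general. The count is also off: there are $(2K+1)^b$ carry vectors per block, so pigeonholing over them would cost $(2K+1)^{rn}$ in total, which is the very loss you wanted to remove. The underlying problem is your grouping. You pack \emph{different coordinates of one element} of $B$ into a single integer, so each digit tuple is an $s$-tuple of single coordinates in $\Z_m$, where $L$-freeness of $B$ gives no information.

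The missing idea is \Cref{lemma:project_to_integers}: put \emph{different elements} of $B$ into the digits of the \emph{same} coordinate. That is, map $(x_1,\dots,x_d)\in B^d$ to $\sum_{j=1}^d\widetilde x_j m^{j-1}\in\{0,\dots,m^d-1\}^{rn}$. Carries are then harmless. Given an integer solution, take the smallest $j$ for which the digit tuple $(x_j^{(1)},\dots,x_j^{(s)})\in B^s$ is nontrivial. All lower digit tuples are trivial, hence satisfy $L=0$ exactly over $\Z$; this is where the equal coefficient partitions enter. Reducing modulo $m^j$ then shows that the $j$-th digit tuple solves $L=0$ in $\Z_m^{rn}$, contradicting the $L$-freeness of $B$. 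If no digit tuple is nontrivial, primitivity makes the whole solution trivial.

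Only after this do you embed the $rn$ coordinates carry-free with base $M=1+c_L(m^d-1)$. This costs a factor $c_L^{rn}$, independent of $d$. With $n=\ell$ and $d$ as large as $N$ allows, you get $r_L(N)\ge (c_Lm)^{-\alpha r\ell}N^{\alpha}$, where $\alpha=\frac1r\log_m|A|-\frac{s-1}{r\ell}\log_m(1+2\ell K)$. The error $\exp(-O(\sqrt{\log N\log\log N}))$ comes from balancing $e^{O(\ell)}$ against $N^{O(\log\ell/\ell)}$ at $\ell\approx\sqrt{\log N\log\log N}$, not from any carry bookkeeping.

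The same lowest-digit argument would also rescue your own route. The lexicographically weighted functions $\sum_j W^{b-j}f_i(a_j)$, with $W$ a large constant, certify that $\{\sum_{j=1}^b\widetilde a_jm^{j-1}: a_j\in A\}$ is an $L$-seed in $\Z_{m^b}^r$. Running your Steps 1--3 on this seed loses only $O(1/b+1/n)$ in the exponent.
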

Conceptually, the theorem says that if we find an $L$-seed $A \subseteq G = \Z_m^r$ of size $|A| = |G|^\alpha$, then this \emph{rate} $\alpha$ essentially carries over to the integer case, meaning that $r_L(N) \geq N^{\alpha - o(1)}$. The condition of the coefficient partitions being the same over $\Z_m$ and $\Z$ is rather weak, and only finitely many $m$ do not satisfy it. 

\medskip

In the genus one case, Theorem 23 in \cite{degenerations_corners} can be applied directly to lift the $L$-seed to $L$-free sets. Their Theorem 23 works more generally for lower-bounding the Shannon capacity of directed hypergraphs. We also cover the higher genus case and use a slicing argument instead of the Salem--Spencer-type argument in their proof. 

\begin{lemma}\label{lemma:L-seed}
    Let $L$ be primitive, let $G$ be a finite abelian group and let $A \subseteq G$ be an $L$-seed with certificate functions $f_1,\dots,f_s$. Let $K \geq 0$ be such that for all $i \in [s-1]$ and all $x \in A$, we have $|f_i(x)| \leq K$. Then for every integer $\ell \geq 1$ there exists an $L$-free set $X \subseteq G^\ell$ of size $|X| \geq |A|^\ell/(1+2\ell K)^{s-1}$.
\end{lemma}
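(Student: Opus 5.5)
Set $A^\ell \subseteq G^\ell$ and extend the certificate functions additively: for $\mathbf x=(x^{(1)},\dots,x^{(\ell)})\in A^\ell$ put $F_i(\mathbf x)=\sum_{j=1}^{\ell} f_i(x^{(j)})$. For $i\in[s-1]$ this gives $F_i(\mathbf x)\in\{-\ell K,\dots,\ell K\}$, a range of at most $1+2\ell K$ values. The plan is a slicing argument. For each vector $\mathbf t=(t_1,\dots,t_{s-1})\in\{-\ell K,\dots,\ell K\}^{s-1}$ let
\[X_{\mathbf t}=\{\mathbf x\in A^\ell : F_i(\mathbf x)=t_i \text{ for all } i\in[s-1]\}.\]
These sets partition $A^\ell$ into at most $(1+2\ell K)^{s-1}$ slices. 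By pigeonhole one slice has size at least $|A|^\ell/(1+2\ell K)^{s-1}$, and I take $X$ to be that slice. It remains to show each slice is $L$-free in $G^\ell$.

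Suppose $(\mathbf x_1,\dots,\mathbf x_s)\in X_{\mathbf t}^s$ solves $L=0$ in $G^\ell$. Then for every coordinate $j$ the tuple $(x_1^{(j)},\dots,x_s^{(j)})\in A^s$ solves $L=0$ in $G$. So $\sum_i f_i(x_i^{(j)})\ge 0$, with equality exactly when the coordinate solution is trivial. Summing over $j$ gives $\sum_{i=1}^s F_i(\mathbf x_i)\ge 0$, with equality iff every coordinate solution is trivial.

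Next I evaluate the sum using the slice. Since $f_s=-(f_1+\dots+f_{s-1})$, we have $F_s=-(F_1+\dots+F_{s-1})$ as functions on $A^\ell$. Because $\mathbf x_s\in X_{\mathbf t}$, this gives $F_s(\mathbf x_s)=-\sum_{i<s}t_i$. Also $F_i(\mathbf x_i)=t_i$ for $i<s$, since $\mathbf x_i\in X_{\mathbf t}$. Hence the total sum is exactly $0$, and every coordinate solution is trivial. This is the key step: fixing the first $s-1$ values forces the last one, so the fact that $f_s$ is not bounded by $K$ does not matter.

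Finally, I need a solution in $G^\ell$ that is trivial in every coordinate to be trivial overall; this is where primitivity is used, and it is the one point needing care. Let $\mathcal T_j$ be the partition of $[s]$ induced by equalities among the $x_i^{(j)}$. By assumption each $\mathcal T_j$ satisfies \Cref{align:coeff_partition2}, so by primitivity it is coarser than the unique finest such partition $\mathcal T_0$. Thus $i,i'$ in the same block of $\mathcal T_0$ gives $x_i^{(j)}=x_{i'}^{(j)}$ for all $j$, i.e. $\mathbf x_i=\mathbf x_{i'}$. So the partition induced on $G^\ell$ is the common refinement of the $\mathcal T_j$, which is still coarser than $\mathcal T_0$. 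Each of its blocks is a union of blocks of $\mathcal T_0$, so its coefficient sums vanish, and the solution is trivial. Coefficients act coordinatewise on $G^\ell$ as endomorphisms, so triviality in $G^\ell$ is exactly this condition. Hence $X$ is $L$-free, which proves the lemma.
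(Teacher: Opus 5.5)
Your proposal is correct and is essentially the paper's own proof: you slice $A^\ell$ by the values of the first $s-1$ summed certificate functions, use $f_s=-(f_1+\dots+f_{s-1})$ to force every coordinatewise inequality to be tight, and finish with pigeonhole. The paper leaves implicit the step from coordinatewise triviality to triviality in $G^\ell$; you make it explicit, and it is valid when primitivity is read with the coefficients acting on $G$ (i.e.\ modulo the exponent of $G$), as in the application to \Cref{theorem:main} where $L$ is primitive mod $m$. Primitivity over $\Z$ alone would not suffice: $x+y-8z+6w$ is primitive over $\Z$ and $\{1,2,4\}\subseteq\Z_7$ is an $L$-seed with $K=0$, yet $\Z_7^2$ has no $L$-free set of size $9$.
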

\begin{proof}
For a fixed integer vector $\kappa=(k_1,\dots,k_{s-1}) \in [-\ell K,\ell K]^{s-1}$, consider the slice 
\[X_\kappa = \left\{(x_1,\dots,x_\ell) \in A^\ell :\, \sum_{j = 1}^\ell f_i(x_j)= k_i \text{ for all } i \in [s-1]\right\}\]
and note that $X_\kappa$ is $L$-free. Indeed, suppose there is a solution $x^{(1)}, \dots,x^{(s)} \in X_\kappa$ with $L(x^{(1)}, \dots,x^{(s)}) = 0$. Write $x^{(i)} = (x_1^{(i)},\dots,x_\ell^{(i)})$ so that $L(x^{(1)}_j, \dots,x^{(s)}_j) = 0$ for all $j \in [\ell]$. Since $A$ is an $L$-seed with certificate functions $f_1,\dots,f_{s}$, we have 
\begin{align}\label{align:degen_condition_of_coord}
    \sum_{i = 1}^{s-1} f_i(x_j^{(i)}) - \sum_{i = 1}^{s-1} f_i(x_j^{(s)})=\sum_{i = 1}^{s} f_i(x_j^{(i)})\geq 0, \quad \text{ for all } j \in [\ell],
\end{align}
with equality if and only if $(x_j^{(1)}, \dots, x_j^{(s)})$ is trivial. Summing over $j$ gives
\[\sum_{j=1}^\ell\sum_{i = 1}^{s-1} f_i(x_j^{(i)}) = \sum_{i = 1}^{s-1} k_i = \sum_{j=1}^\ell\sum_{i = 1}^{s-1} f_i(x_j^{(s)})\]
so that in \Cref{align:degen_condition_of_coord} we actually have equality for every $j \in [\ell]$. This shows that $(x^{(1)},\dots,x^{(s)})$ is trivial. Now take $\kappa$ such that $|X_\kappa|$ is maximal. By the pigeonhole principle, we have $|X_\kappa| \geq |A|^\ell/(1+2\ell K)^{s-1}$. 
\end{proof}

For an element $x \in \Z_m$, we write $\widetilde x$ for its representative in $\{0,\dots,m-1\}$. If $x= (x_1,\dots,x_n) \in \Z_m^n$, then $\widetilde x = (\widetilde x_1,\dots,\widetilde x_n)$. In the next lemma, we use base expansions twice to project the $L$-free set just constructed to the integers. 

\begin{lemma}\label{lemma:project_to_integers}
    Let $L$ be primitive over $\Z_m$ such that $L$ has the same coefficient partition over $\Z_m$ as over $\Z$. Assume there is an $L$-free set $X \subseteq \Z_m^n$ of size $m^{\alpha n}$. Let $c_L = \frac12\sum_{i=1}^s |c_i|$. Then for any integer $N \geq 1$, we have
    \[r_L(N) \geq \frac1{(c_Lm)^{\alpha n}} N^{\alpha}.\]
\end{lemma}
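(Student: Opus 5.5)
The plan is to combine the product property of primitive forms with two base expansions. The first expansion turns $\Z_m^n$-freeness into $\Z$-freeness. The second iterates this so that the lift costs only a bounded factor, rather than a factor that grows with $N$.

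\textbf{Step 1 (digits below $m$ do not carry).} Define $\phi:\Z_m^n\to\Z_{\ge0}$ by $\phi(x)=\sum_{j<n}\widetilde x_j\,(c_Lm)^j$. Suppose $\sum_i c_i\phi(x^{(i)})=0$. In each digit we have $|\sum_i c_i\widetilde x_j^{(i)}|\le c_L(m-1)<c_Lm$, because the positive coefficients sum to $c_L$, as do the absolute values of the negative ones. So there is no carry, and reading digits from the bottom gives $\sum_i c_i\widetilde x_j^{(i)}=0$ in $\Z$ for every $j$. Hence $(x^{(1)},\dots,x^{(s)})$ solves $L=0$ in $\Z_m^n$, and it is trivial there because $X$ is $L$-free. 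Triviality in $\Z_m^n$ is governed by the coefficient partition mod $m$, which by hypothesis equals the one over $\Z$. Since $\phi$ is injective, the equality pattern transfers, and the solution is trivial over $\Z$. Thus $A_0:=\phi(X)\subseteq[0,T)$ with $T=(c_Lm)^n$ is $L$-free, and $|A_0|=m^{\alpha n}$.

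\textbf{Step 2 (iterate with full blocks).} For $k\ge1$, use that $X^k\subseteq\Z_m^{nk}$ is $L$-free, since $L$ is primitive. Apply the same lift coordinatewise, but with base $m$ inside blocks and a single safety factor overall. Concretely, I would map $y\in X^k$ to $\Phi(y)=\sum_{j<nk}\widetilde y_j m^j$ and show that an integer solution among $\Phi(X^k)$ forces a $\Z_m^{nk}$-solution among the $y^{(i)}$.

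Then choose $k$ maximal with $c_L m^{nk}\le N$. Dilating or translating into $[N]$ gives
\[
r_L(N)\ \ge\ m^{\alpha nk}\ \ge\ \Bigl(\tfrac{N}{c_Lm^{n}}\Bigr)^{\alpha}\ \ge\ \frac{N^\alpha}{(c_Lm)^{\alpha n}},
\]
which is exactly the claimed bound. The factor $c_L^{\alpha}$ comes from the single top padding, and the factor $m^{\alpha n}$ from rounding $k$.

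\textbf{Main obstacle.} The hard step is the carry analysis in Step 2. Digits now range up to $m-1$ while the base is only $m$, so each column gives
\[
\sum_i c_i\widetilde y_j^{(i)}+e_{j-1}=m e_j \qquad\text{with carries } |e_j|<c_L,
\]
and a column is a $\Z_m$-solution only when the incoming carry vanishes.

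My first attempt would be a slicing argument in the spirit of \Cref{lemma:L-seed}. I would fix the values of auxiliary functionals for which an imbalance would have to be compensated by a nonzero carry somewhere. Pigeonholing over the boundedly many such classes, and absorbing the cost into the single factor $c_L$, would then force all carries to vanish, after which Step 1 applies verbatim.

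If that fails, the fallback is to insert zero padding between blocks. This is only acceptable if the padding can be made to cost a constant independent of $k$. That is precisely where I expect the real difficulty to lie: naive per-block padding changes the exponent to $\alpha\log m/\log(c_Lm)$, which is weaker than the statement.
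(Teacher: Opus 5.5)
The gap is in Step 2, and the step is not just unverified: the claim that an integer solution among $\Phi(X^k)$ forces a $\Z_m^{nk}$-solution is false.

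Take $L(x,y,z)=x+y-2z$, $m=5$, $n=2$, $k=1$, so that $\Phi(y)=\widetilde y_0+5\widetilde y_1$. Here $L$ is primitive mod $5$, with the same coefficient partition $\{\{1,2,3\}\}$ as over $\Z$. The set $X=\{(4,0),(0,1),(1,1)\}\subseteq\Z_5^2$ is $L$-free. Indeed, $x=y$ forces $z=x$, and for distinct $x,y\in X$ the only candidate $z=2^{-1}(x+y)=3(x+y)$ is $(2,3)$, $(0,3)$ or $(3,1)$, none of which lies in $X$. Yet $\Phi(X)=\{4,5,6\}$ and $4+6-2\cdot 5=0$. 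The low digits give $4+1-2\cdot 0=5$, and the resulting carry cancels $0+1-2\cdot 1=-1$ in the high digit.

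The structural reason is this. A ``lowest nonvanishing column'' argument only certifies that one coordinate is a $\Z_m$-solution, whereas $L$-freeness of $X$ is a statement about whole vectors in $\Z_m^n$. In your digit ordering, carries pass freely between the $n$ coordinates of a single copy of $X$. Neither fallback closes this: the slicing idea is never specified, and, as you note yourself, padding between copies costs a factor per copy and lowers the exponent.

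The missing idea is to swap the order of the two expansions. First expand in base $m$ across copies while keeping the $n$ coordinates apart. That is, take
\[
Y=\Bigl\{\sum_{j=1}^d\widetilde x_j m^{j-1}: x_1,\dots,x_d\in X\Bigr\}\subseteq\{0,\dots,m^d-1\}^n\subseteq\Z^n,
\]
so that every base-$m$ digit of an element of $Y$ is an entire vector $\widetilde x_j$.

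Now suppose $Y$ contains a nontrivial solution. For primitive $L$, triviality can be checked digit by digit, so there is a smallest $\ell$ such that the digit tuple $(\widetilde x^{(1)}_\ell,\dots,\widetilde x^{(s)}_\ell)$ is not a trivial solution over $\Z$. All lower digit tuples are trivial solutions, so $L$ vanishes on them exactly and no carry reaches digit $\ell$. Reducing mod $m^\ell$ then gives $L(x^{(1)}_\ell,\dots,x^{(s)}_\ell)=0$ in all $n$ coordinates of $\Z_m^n$ simultaneously. $L$-freeness of $X$, together with the coefficient-partition hypothesis, then contradicts the choice of $\ell$. This removes your carry obstacle without any slicing.

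Only afterwards use your Step 1, which is correct, as the outer carry-free packing of the $n$ coordinates of $Y$, with base $M=1+c_L(m^d-1)$. This costs a factor $c_L^n$ independent of $d$, and yields an $L$-free $Z\subseteq[M^n]$ with $|Z|=|X|^d$. Choosing $d$ with $c_L^n m^{dn}\le N<c_L^n m^{(d+1)n}$ gives the stated bound. Your accounting with a single factor $c_L$ is not backed by any construction; the working one pays $c_L^n$, which the constant $(c_Lm)^{-\alpha n}$ in the statement absorbs.
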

\begin{proof}
We first show that, for an integer $d \geq 1$ that we choose later, the set
\[Y = \left\{\sum_{j = 1}^d \widetilde x_j m^{j-1}  : x_1,\dots,x_d \in X\right\} \subseteq \{0,\dots, m^d-1\}^n\]
is $L$-free. Suppose there is a nontrivial solution $(x^{(1)},\dots,x^{(s)}) \in Y^s$ to $L = 0$ and write 
\[x^{(i)} = \sum_{j = 1}^d \widetilde x_j^{(i)} m^{j-1}, \quad \text{ for } i \in [s].\]
Since $L$ is primitive, there is $\ell \in [d]$ such that $(\widetilde x_\ell^{(1)}, \dots,\widetilde x_\ell^{(s)})$ is not a trivial solution to $L = 0$ over $\Z$. Consider the smallest such index $\ell \in [d]$. This in particular guarantees that $L(\widetilde x_j^{(1)},\dots,\widetilde x_j^{(s)}) =0$ for $1 \leq j < \ell$. Taking our equation mod $m^\ell$ gives
\[0=L(x^{(1)},\dots,x^{(s)})=\sum_{j = 1}^d  L(\widetilde x_j^{(1)},\dots,\widetilde x_j^{(s)}) m^{j-1}\equiv L(\widetilde x_\ell^{(1)}, \dots, \widetilde x_\ell^{(s)})m^{\ell-1} \mod m^\ell,\]
and thus $L(x_\ell^{(1)}, \dots, x_\ell^{(s)}) =0$ in $\Z_m$. Since $X$ is $L$-free, $(x_\ell^{(1)}, \dots, x_\ell^{(s)})$ must be a trivial solution, contradicting our choice of $\ell$.\\

Now let $M = 1+c_L(m^d-1)$ and take  
\[Z = \left\{1+\sum_{k = 1}^n y_k M^{k-1}  : (y_1,\dots,y_n) \in Y\right\} \subseteq [M^n].\]
We claim that $Z$ is also $L$-free. Indeed, suppose there is a solution $(z^{(1)},\dots,z^{(s)}) \in Z^s$ to $L = 0$ and write 
\[z^{(i)} = 1+\sum_{k = 1}^n y_k^{(i)} M^{k-1}.\]
Note that, by the choice of $M$, we have $|L(y_k^{(1)}, \dots, y_k^{(s)})|<M$ for every $k \in [n]$. By the uniqueness of base-$M$ expansion, this means that $(y_k^{(1)}, \dots,y_k^{(s)})$ is a solution to $L=0$ for every $k \in [n]$. Since $Y$ is $L$-free, each of them is trivial, also making $(z^{(1)},\dots,z^{(s)})$ trivial.\\

For $N \leq c_L^n m^n$, the lemma just says $r_L(N) \geq 1$, so let $N > c_L^n m^n$. Take $d$ such that
\[c_L^n m^{dn} \leq N < c_L^n m^{(d+1)n}.\]
This means the $L$-free $Z$ we constructed above sits inside $[M^n] \subseteq [N]$ and has size 
\[|Z|=|X|^d = m^{\alpha nd} \geq \frac1{(c_Lm)^{\alpha n}} N^{\alpha}.\]
\end{proof}

\begin{proof}[Proof of \Cref{theorem:main}]
    Applying \Cref{lemma:L-seed} and \Cref{lemma:project_to_integers} gives 
    \[r_L(N) \geq \frac1{(c_Lm)^{\alpha r\ell}} N^{\alpha}, \quad \text{with }  \alpha = \frac1r\log_m|A| - \frac{s-1}{r\ell}\log_m\left(1+2\ell K\right),\]
    where $K$ is defined as in \Cref{lemma:L-seed}.
    This can be rewritten as
    \[r_L(N) \geq N^{\frac1r\log_m|A|} \exp\left(-O_{A,L}\left(\ell+\frac{\log \ell}\ell\log N\right)\right)\]
    so that for the choice $\ell = \left\lceil \sqrt{\log N \log \log N}\right\rceil$ we indeed get the desired lower bound.
\end{proof}

\section{The peeling method and examples}\label{section:peeling}

A special case of \Cref{theorem:main} also follows from the peeling method. Variants of this method have been used in \cite{karoli_solymosi_peeling} and \cite{improving_behrend}, and in fact all the examples we provide in this paper are peelable. Cases in which our best lower bounds do not come from the peeling method will be explored in forthcoming work.

\medskip

Let $G$ be a finite abelian group, let $A \subseteq G$, and let $k \in [s]$. An ordering $a_1, \dots, a_n$ of the elements of $A$ is called a \emph{peeling order for $x_k$ in $L$} if 
\[(x_1,\dots,x_s) \in \{a_i,a_{i+1},\dots,a_n\}^s, \quad L(x_1,\dots, x_s) = 0, \quad x_k = a_i \quad\implies \quad a_i = x_1 = x_2 = \dots = x_s.\]
In words, this means that in $A$, every solution with $a_1$ in the $k$th position must be diagonal. If we remove $a_1$, then every solution in $A \setminus \{a_1\}$ with $a_2$ in the $k$th position must be diagonal, and so forth. If there is a peeling order of $A$, we say $A$ is \emph{peelable}. If $L$ has genus at least two, the only peelable sets are singletons, so the peeling method gives nontrivial lower bounds only for genus one forms. The following proposition can be used in place of \Cref{lemma:L-seed} to prove \Cref{theorem:main}. We include a short proof to see the Salem--Spencer method \cite{salem1942sets} in action.

\begin{proposition}[The peeling method]\label{lemma:peeling_method}
    Let $L$ be of genus one and let $A \subseteq G$ be peelable for $L$ with peeling order $a_1,\dots,a_n$ for $x_1$. Then for every $\ell$ divisible by $n$ there exists an $L$-free set $X \subseteq G^\ell$ of size 
    \[|X| = \frac{\ell!}{((\ell/n)!)^n} \geq n^{\ell-o(\ell)}.\]
\end{proposition}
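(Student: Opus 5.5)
We take $X$ to be the type class in which each $a_t$ occurs equally often:
\[X=\{(x_1,\dots,x_\ell)\in A^\ell : |\{j : x_j = a_t\}| = \ell/n \text{ for all } t\in[n]\}.\]
Then $|X| = \ell!/((\ell/n)!)^n$ is a multinomial coefficient. The bound $n^{\ell-o(\ell)}$ follows from Stirling's formula: $\ell!/((\ell/n)!)^n \geq n^\ell/(\ell+1)^{n}$, since the multinomial coefficients of the $\binom{\ell+n-1}{n-1}\le (\ell+1)^{n}$ types sum to $n^\ell$ and the balanced one is the largest. It remains to show that $X$ is $L$-free.

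\textbf{Step 1: reduction to coordinatewise diagonal solutions.} Since $L$ has genus one, the only trivial solutions are the diagonal ones $x_1=\dots=x_s$. So it suffices to show that if $x^{(1)},\dots,x^{(s)}\in X$ satisfy $L=0$, then $x^{(1)}=\dots=x^{(s)}$. The equation holds coordinatewise: $L(x^{(1)}_j,\dots,x^{(s)}_j)=0$ for every $j\in[\ell]$. It is therefore enough to show that every coordinate solution is diagonal.

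\textbf{Step 2: induction along the peeling order (the Salem--Spencer counting step).} We prove by induction on $t=1,\dots,n$ the following claim: for every $j$ with $x^{(1)}_j=a_t$, we have $x^{(i)}_j=a_t$ for all $i$, i.e. the coordinate solution is diagonal.

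Suppose the claim holds for $a_1,\dots,a_{t-1}$. Let $J_u$ denote the set of coordinates $j$ with $x^{(1)}_j=a_u$. By the inductive hypothesis, on $J_1\cup\dots\cup J_{t-1}$ every $x^{(i)}$ agrees with $x^{(1)}$. Each $x^{(i)}\in X$ contains exactly $\ell/n$ copies of each $a_u$, and for $u<t$ all of them lie on $J_u$. Hence on the complementary coordinates $J' = [\ell]\setminus(J_1\cup\dots\cup J_{t-1})$, every $x^{(i)}_j$ lies in $\{a_t,\dots,a_n\}$.

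Now take $j\in J_t$. Then $(x^{(1)}_j,\dots,x^{(s)}_j)\in\{a_t,\dots,a_n\}^s$ is a solution of $L=0$ with first entry $a_t$. By the definition of a peeling order for $x_1$, it is diagonal equal to $a_t$. This completes the induction. Since every $j$ lies in some $J_t$, every coordinate solution is diagonal, and so $X$ is $L$-free.

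The main point to get right is Step 2, and specifically the use of equal multiplicities. Fixing the counts forces the coordinates of the other $x^{(i)}$ outside $J_1\cup\dots\cup J_{t-1}$ to avoid the already-peeled elements, and this is exactly what lets the peeling hypothesis apply to the restricted set $\{a_t,\dots,a_n\}$. Choosing the balanced type class is also what gives the optimal size, which is why we fix counts rather than, say, weighted sums.
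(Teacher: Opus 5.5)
Your proof is correct and follows the paper's approach. You use the same balanced type class $X$, and $L$-freeness comes from the equal multiplicities combined with the peeling property. The paper phrases this as a minimal counterexample: the first $a_k$ appearing in a non-diagonal coordinate would occur strictly fewer times in $x^{(1)}$ than in some other $x^{(i)}$. You run the equivalent forward induction along the peeling order instead, and your type-counting bound $|X|\geq n^\ell/(\ell+1)^n$ is a clean substitute for the Stirling estimate.
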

\begin{proof}
Let $X \subseteq A^\ell$ be the set of all tuples $(x_1,\dots,x_\ell)$ in which each $a \in A$ appears exactly $\ell/n$ times. It has the claimed size by Stirling's approximation. Suppose there is a nontrivial solution $(x^{(1)},\dots, x^{(s)}) \in X^s$ to $L=0$ and write $x^{(i)} = (x^{(i)}_1,\dots,x^{(i)}_\ell)$. Consider all those $x^{(i)}_j$ for which the $j$-th coordinate $(x^{(1)}_j,\dots, x^{(s)}_j)$ is not a trivial solution. Let $k \in [n]$ be minimal so that $a_k \in A$ is among those $x^{(i)}_j$. By the peeling property, $x^{(1)}$ cannot have $a_k$ as an entry in a coordinate $j \in [\ell]$ in which the coordinate-wise solution is nontrivial. This shows that $x^{(1)}$ contains strictly fewer occurrences of $a_k$ than another $x^{(i)}$, which contradicts the choice of $X$. 
\end{proof}

\begin{proposition}
    Let $L$ be of genus one and let $A \subseteq G$ be peelable. Then $A$ is an $L$-seed. 
\end{proposition}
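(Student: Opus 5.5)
Genus one means the only trivial solutions are the diagonal ones $(x,\dots,x)$. So we must find $f_1,\dots,f_s\colon A\to\Z$ such that $\sum_i f_i(x_i)=0$ on diagonal tuples and $\sum_i f_i(x_i)>0$ on every non-diagonal solution in $A^s$.

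Let $a_1,\dots,a_n$ be a peeling order for $x_k$, and write $\iota(a_i)=i$. I would put weight only on the $k$th coordinate against the others, and use exponentially growing weights so that the smallest index present dominates. Take $w(a_i)=B^{\,n-i}$ with $B$ large, for instance $B=s+1$. Set $f_k(x)=w(x)$ and $f_j(x)=-w(x)/(s-1)$ for $j\neq k$. To keep values integral, multiply everything by $s-1$: $f_k = (s-1)w$ and $f_j=-w$ for $j\ne k$. Then $\sum_j f_j(x)=0$ for every $x$, so diagonal tuples give $0$.

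Now take a non-diagonal solution $(x_1,\dots,x_s)\in A^s$. Let $i$ be the minimal index with $a_i\in\{x_1,\dots,x_s\}$. All entries lie in $\{a_i,\dots,a_n\}$, so by the peeling property $x_k\neq a_i$; otherwise the solution would be diagonal. Hence $\iota(x_k)>i$, and some $x_j$ with $j\ne k$ equals $a_i$.

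The sum is $(s-1)w(x_k)-\sum_{j\ne k}w(x_j)$. At first this has the wrong sign: the $k$th coordinate carries the large weight, yet $x_k$ is the later element. So flip the signs: $f_k=-(s-1)w$ and $f_j=w$ for $j\neq k$. The sum becomes $\sum_{j\ne k}w(x_j)-(s-1)w(x_k)\ge B^{n-i}-(s-1)B^{n-i-1}$, which is positive once $B> s-1$. The remaining terms $w(x_j)$ are nonnegative, so dropping them only lowers the sum.

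The only delicate point is this sign and weight bookkeeping: the smallest-index element must appear outside position $k$ and dominate the possibly $s-1$ copies of the weight of $x_k$. Choosing $B=s$ already suffices.
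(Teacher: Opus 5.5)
Your proof is correct and matches the paper's argument: the paper also sets $f(a_\ell)=s^{n-\ell}$, puts $-(s-1)f$ on the peeled variable and $f$ on all the others, and bounds the sum below by $-(s-1)s^{n-\ell}+s^{n-k}>0$ using an entry of smaller index in a non-peeled position. Your final choice ($B=s$, after the sign flip) is exactly this construction, so in a write-up you can simply drop the initial wrong-sign attempt.
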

\begin{proof}
    Let $a_1,\dots,a_n$ be a peeling order of $A$ for $x_1$. Define $f: A \to \Z$ by $f(a_\ell) = s^{n-\ell}$ and let $f_1 = -(s-1) f$, $f_2=\dots=f_s = f$. We claim that $f_1,\dots, f_s$ are certificate functions for $A$. Indeed, since the only trivial solutions of a genus one equation are diagonal, and since $f_1 + \dots + f_s = 0$, we immediately have 
    \[\sum_{i = 1}^{s} f_i(x_i) = 0 \quad \text{for all trivial solutions } (x_1,\dots,x_s) \in A^s.\]
    Now consider a nontrivial solution $(x_1,\dots, x_s) \in A^s$. Because of the peeling property, we know that if $x_1 = a_\ell$, then there must be a $j \in \{2,3,\dots,s\}$ such that $x_j = a_k$ with $1 \leq k < \ell$. We thus have
    \[\sum_{i = 1}^{s} f_i(x_i) \geq f_1(a_\ell) + f_j(a_k) = -(s-1) s^{n-\ell} + s^{n-k}> 0.\qedhere\] 
    \end{proof}

\subsection{Behrend's construction in the language of $L$-seeds}

Behrend's construction \cite{Behrend} can also be explained using our framework of $L$-seeds. Assume that $L$ is convex, that is, all of the coefficients of $L(x_1, \dots,x_s ) = c_1x_1+ \dots + c_s x_s$ have the same sign, except for exactly one, say $c_1 > 0$ and $c_2,\dots, c_s < 0$. Then $A= \{1,\dots,m\} \subseteq \Z_{c_1m}$ is an $L$-seed with certificate functions 
\[f_1,\dots, f_s: A \to \Z, \quad f_i(x) = -c_ix^2.\] 
Indeed, every nontrivial solution $(x_1,\dots, x_s) \in A^s$ to $L = 0$ mod $c_1m$ is also a solution over $\Z$, so that we get 
\[\sum_{i =1}^s f_i(x_i) = -\sum_{i = 2}^s c_i (x_i -x_1)^2 > 0.\]
 Trivial solutions are again diagonal, and for them we know $f_1 + \dots + f_s = 0$. Applying \Cref{theorem:main} gives $r_L(N) \geq N^{1-\log(c_1)/\log(c_1m) -o(1)}$ and, letting $m \to \infty$, we get $r_L(N) \geq N^{1-o(1)}$.

\medskip

Note that if we alter the coefficients of $L$ by multiples of $c_1m$ in the above, we can construct non-convex forms for which the exponent is arbitrarily close to $1$. Take, for example, the form
\[L_m(x,y,z,w) = (3m+1)x + y - (3m-1) z - 3w.\]
The argument from above still gives $r_{L_m}(N) \geq N^{1-\log(3)/\log(3m) -o(1)}$, since mod $3m$ the form $L_m$ reduces to $x+y+z-3w$. For any $\varepsilon > 0$, we can therefore find a large enough $m$ such that for the non-convex $L_m$ we get $r_{L_m}(N) = \Omega(N^{1-\varepsilon})$. This is Ruzsa's construction and can be found in Theorem 7.5 in \cite{RuzsaSolving}.

\subsection{A human-checkable finite $L$-seed} Consider the form 
\[L_7(x,y,z,w) = x+y+z-3w.\]
We claim that $A = \{0,1,2\} \subseteq \Z_7$ is an $L_7$-seed. In fact, the only nontrivial solutions $(x,y,z,w) \in A^4$ to $L_7 = 0$ are the ones with $\{x,y,z\} = A$ and $w = 1$, so that $A$ is even peelable with peeling order $0,1,2$ for the variable $w$. As a consequence, we see that any translation-invariant form $L$ that reduces to a nonzero multiple of $L_7$ mod $7$ satisfies 
\begin{align}\label{align:012_seed}
    r_{L}(N) \geq N^{\log_7(3) - o(1)}, \quad \log_7(3) > 0.564575.
\end{align}
For example, $L$ can be any of
\[8x+y-6z-3w, \quad 3x+3y-4z-2w, \quad 4x +4y -3z-5w.\]
An exponent for $8x+y-6z-3w$ strictly exceeding $2/3$ would have consequences in ergodic theory; see Section 5.5 in \cite{frantzikinakis}. We were able to obtain a slightly better exponent than the one in \Cref{align:012_seed}, but pushing beyond $2/3$ seems out of reach for us. 

\subsection{$3x+y=2z+2w$ and the $15$-point seed in $\F_5^3$}\label{subsection:15PointSeed} Let $L_3(x,y,z,w) = 3x+y-2z-2w$. A search with AlphaEvolve \cite{novikov2025alphaevolve} gave an $L_3$-seed $A \subseteq \F_5^3$ with $15$ points. Its certificate functions are $f_1,f_2,f_3,f_4 : A \to \Z$ with $f_1 = f_3 = f_4 = f$ and $f_2 = -3f$, where $f$ and $A$ are given in the following table. In fact, the order in which $A$ is listed below (reading from top to bottom first, then from left to right) is a peeling order when we peel the variable $y$.
\[
\begin{array}{c|c}
a & f(a) \\ \hline
(1,0,1) & 183 \\
(0,1,1) & 61 \\
(3,3,2) & 23 \\
(0,3,4) & 8 \\
(1,1,4) & 8
\end{array}
\qquad
\begin{array}{c|c}
a & f(a) \\ \hline
(4,4,2) & 2 \\
(2,0,0) & 3 \\
(1,4,0) & 2 \\
(1,1,3) & 3 \\
(4,0,1) & 1
\end{array}
\qquad
\begin{array}{c|c}
a & f(a) \\ \hline
(0,1,2) & 1 \\
(3,3,3) & 0 \\
(2,0,3) & 1 \\
(0,2,2) & 2 \\
(0,3,1) & 0
\end{array}
\]

Note that together with \Cref{theorem:main}, this certificate gives the exponent in \Cref{theorem:3x+y=2z+2w}. The exponent $\log15/(3\log 5)$ is not optimal, and we were able to find slightly better $L$-seeds in higher dimensions. We present this rather simple construction to illustrate the method. 

\section{Ruzsa's parabola construction and beyond}\label{section:parabola_and_beyond}

The starting point for this section is Theorem 7.3 in \cite{RuzsaSolving}. Consider the translation-invariant linear form 
\begin{align}\label{align:four_vars_form2}
    L_4(x,y,z,w) = ax+by-cz-dw,
\end{align}
where $a,b,c,d > 0$ and $a+b = c+d$. Let $p$ be a prime not dividing $a+b$ such that $abcd$ is a quadratic non-residue mod $p$. Then the parabola 
\[P = \{(x,x^2) : x \in \F_p\} \subseteq \F_p^2\]
is $L_4$-free. Indeed, suppose there is a solution, so there exist $x,y,z,w \in \F_p$ with 
\[ax + by = cz + dw, \quad ax^2 + by^2 = cz^2 + dw^2.\]
Squaring the first equation and subtracting $(a+b)$ times the second gives 
\[ab(x-y)^2 = cd (z-w)^2.\]
Since $abcd$ is not a square in $\F_p$, this forces $x =y$ and $z=w$. The first equation now gives $(a+b)(x-z) = 0$ and hence $x=z$, since $a+b$ is not divisible by $p$.

\medskip

Note that every nonsquare integer is a quadratic non-residue modulo infinitely many primes, so that from \Cref{lemma:project_to_integers} it follows that $r_{L_4}(N) = \Omega(\sqrt N)$ whenever $abcd$ is not a square. When $abcd$ is a square, Ruzsa used Behrend's construction to find a large $L_4$-free subset of the parabola and obtained $r_{L_4}(N) \geq N^{1/2 -o(1)}$. 

\subsection{Beating products for four-variable forms} Applying the following theorem of the first author and Tyrrell to the parabola above immediately shows that, when $abcd$ is not a square, the exponent $1/2$ is never optimal. 

\begin{theorem}[Corollary 1.4 in \cite{beatingProducts}]\label{theorem:beatingProducts}
Let $L(x_1,\ldots,x_s)=0$ be a genus one translation-invariant equation over $\F_q$, where $s \geq 3$ and $q$ is a prime power. If $A\subseteq \F_q^n$ is $L$-free and of size $|A| = c^n$, then there is an integer $m>n$ and an $L$-free set $B\subseteq \F_q^m$ of size $|B|>c^m$.
\end{theorem}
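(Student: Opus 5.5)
Since \Cref{theorem:beatingProducts} is quoted from \cite{beatingProducts}, this is only a plan for how I would reprove it. The idea is to show that some Cartesian power of $A$ is \emph{not} a maximal $L$-free set. Since $L$ has genus one, it is primitive, so $A^k\subseteq\F_q^{kn}$ is $L$-free of size exactly $c^{kn}$. So it suffices to find $k$ and a set $B\subseteq\F_q^{kn}$ that is $L$-free with $|B|>|A^k|$, and put $m=kn$. If $m=n$ works, replace $A$ by $A^2$ first, so that we may assume $m>n$. The argument therefore reduces to the claim that $A^k$ can be enlarged, perhaps after deleting a few points, for suitable $k$.

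Key steps, in order. First, use that for a genus one form the only trivial solutions are the diagonal ones. Hence a point $v\notin A^k$ can be added to $A^k$ exactly when no non-diagonal solution of $L=0$ uses $v$ in some positions and points of $A^k$ in the others. Second, describe these blocking solutions coordinatewise. Write $v=(v_1,\dots,v_k)$ with $v_j\in\F_q^n$. A solution with entries in $A^k\cup\{v\}$ is, in each block $j$, a solution in $A\cup\{v_j\}$ with the same pattern $S\subseteq[s]$ of positions occupied by $v$. Fix a pattern $S$. Since $A$ is $L$-free, every block where $v_j\in A$ must be diagonal. So a blocked $v$ needs, for one fixed $S$, a solution in every block $j$, and in the blocks where $v_j\in A$ that solution is forced to be diagonal.

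Third, build $v$ from a single outside point. Pick $u\notin A$ and take $v=(u,a,a,\dots,a)$ with $a\in A$. For $k\geq 2$ the blocks $j\geq 2$ force all entries from $A^k$ to equal $a$ there. This pins down the pattern $S$ completely and leaves only a one-block condition on $u$. I would then vary $u$, $a$, and the number of copies of $u$ placed in different blocks. If all these candidates are blocked, then $A$ must satisfy a very rigid closure property: every outside point is reached by a solution with the same pattern $S$ uniformly. I would try to show this is impossible for $|A|\geq 2$, using the fact that the pattern sums $\sum_{i\in S}c_i$ are nonzero (genus one) and comparing two different points of $A$.

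If the single-point extension fails, the fallback is a deletion argument. Remove from $A^k$ a small structured subset, for instance all points whose first block equals a fixed $a_0$, a loss of a factor $1/|A|$. Then add a larger family of new points $\{u\}\times A'$ whose blocking solutions all have to pass through the removed slice. This is the swap trick: trading a slice for a bigger slice. Combining it with taking further powers, so that the gain is a factor $1+\delta$ that persists, would give $|B|>c^m$.

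I expect the main obstacle to be the third step: proving that the rigid closure property cannot hold for every outside point. The blocking solutions depend on the pattern $S$, and one must rule out that different outside points are blocked by different patterns in a coordinated way. I would try first to handle this by pigeonholing on $S$ over the $2^s$ patterns and then using translation invariance.
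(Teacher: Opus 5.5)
Your plan has no step that actually produces a gain. (The paper itself gives no proof of this statement; it imports it from \cite{beatingProducts}.)

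\textbf{The main route.} Showing that some $A^k$ is not a maximal $L$-free set is false in general. You may assume $|A|$ is the maximum size of an $L$-free set in $\F_q^n$, since otherwise the square of a larger set already wins. Your own block analysis then shows that $v\notin A^k$ is blocked if and only if its outside blocks $v_j\notin A$ admit a \emph{common} blocking pattern $S$. In particular, $(u,a,\dots,a)$ is blocked for every $u\notin A$. The diagonal blocks are compatible with every $S$, so they do not pin $S$ down as you claim.

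The ``rigid closure property'' you hope to refute does occur. The form $L=x+y+z$ over $\F_3$ is translation-invariant of genus one. For $A=\{0,1\}\subseteq\F_3$, or any complete cap, every outside point equals $-(a+b)$ with $a\neq b$ in $A$. So the single pattern $S=\{1\}$ blocks all outside points, and $\{0,1\}^k$ is a complete cap in $\F_3^k$ for every $k$. No choice of $u$, $a$, number of $u$-blocks, or pigeonholing over patterns yields an addable point, even though a cap of size $9>2^3$ exists in $\F_3^3$.

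\textbf{The swap fallback.} This step is circular. After removing $\{a_0\}\times A^{k-1}$ and adding $\{u\}\times A'$, your set is a union of $|A|$ fibres over first-block values, and $|A|-1$ of them have size $c^{n(k-1)}$. So $|B|>c^{kn}$ forces $|A'|>c^{n(k-1)}$. Since $\{u\}\times A'$ is $L$-free only if $A'$ is, you would need an $L$-free set in $\F_q^{n(k-1)}$ larger than $c^{n(k-1)}$. That is the statement itself.

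\textbf{What is missing.} Unless some fibre can already be made larger than the product, a construction beating $c^m$ must take more than $|A|$ distinct values in some block. Its block-projection must therefore contain nontrivial solutions, for instance $A\cup\{u\}$ or several new points used across many blocks. You would then have to restrict the sets of positions where new points occur so that no $s$ allowed position sets, not all equal, have every coordinate pattern in $\{\emptyset,[s]\}$ or realizable by a solution in $A\cup\{u\}$. Finally, you would need a counting bound showing that the number of admissible position sets outweighs the factor $|A|$ lost each time a new point is used. This global step is the whole difficulty, and your proposal does not supply it.
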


Note that in the four-variable case above, $abcd$ not being a square implies that $L_4$ is of genus one. Together with the parabola construction, this implies \Cref{theorem:four_vars_form}. 

\fourvars*

More generally, \Cref{lemma:project_to_integers} and \Cref{theorem:beatingProducts} show that the exponent obtained from an $L$-free set in $\F_p^n$ is never optimal if $L$ has genus one mod $p$. For higher genus, we cannot hope for \Cref{theorem:beatingProducts} to still hold in general. It could still be the case, however, that at least when $q$ is prime, it holds. 

\medskip 

Opening up the proof of \Cref{theorem:beatingProducts} shows that the $\varepsilon_L$ in \Cref{theorem:four_vars_form} is tiny. For example, for $L_3(x,y,z,w) = 3x+y-2z-2w$, a naive application improves the exponent $1/2$ roughly in the 180th decimal place. 

\subsection{The projective completion of the moment curve} 

For a general translation-invariant form $L(x_1,\dots,x_s)= c_1 x_1 + \dots + c_s x_s$, Theorem 2.1 in \cite{RuzsaSolving} uses a greedy construction to show that $r_L(N) = \Omega_s(N^{1/(s-1)})$. For primitive $L$, the same exponent can be recovered using a moment curve together with \Cref{lemma:project_to_integers}.

\begin{proposition}\label{proposition:usual_moment_curve}
   Let $p$ be a prime such that $L$ has the same coefficient partition over $\F_p$ and over $\Z$. Then the $(s-1)$-dimensional moment curve mod $p$, that is,
\[\Gamma = \{ (t, t^2, \dots, t^{s-1}) : t \in \F_p\} \subseteq \F_p^{s-1},\]
is $L$-free.
\end{proposition}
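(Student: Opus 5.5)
Suppose $(t_1,\dots,t_s)\in\F_p^s$ gives a solution to $L=0$ on $\Gamma$. Then the power sums vanish:
\[\sum_{i=1}^s c_i t_i^k = 0 \quad\text{for } k = 0,1,\dots,s-1,\]
where $k=0$ holds by translation invariance. The plan is to group equal values and read these equations as a Vandermonde system.

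Let $u_1,\dots,u_r$ be the distinct values among $t_1,\dots,t_s$, so $r\le s$. Let $\mathcal T$ be the partition of $[s]$ into the blocks $T_j=\{i: t_i=u_j\}$, and set $C_j=\sum_{i\in T_j} c_i\in\F_p$. The equations become $\sum_{j=1}^r C_j u_j^k=0$ for $k=0,\dots,r-1$. We may use only the first $r$ of the $s$ available equations because $r\le s$. This is an $r\times r$ Vandermonde system in the unknowns $C_j$ with nodes $u_1,\dots,u_r$. The nodes are distinct elements of $\F_p$, so the determinant $\prod_{j<j'}(u_{j'}-u_j)$ is nonzero and hence $C_j=0$ in $\F_p$ for every $j$. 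In other words, $\mathcal T$ satisfies \Cref{align:coeff_partition2} over $\F_p$.

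Now use the hypothesis on $p$. The coefficient partition $\mathcal T_0$ of $L$ mod $p$ is the unique finest partition with zero block sums mod $p$, and by assumption it coincides with the coefficient partition over $\Z$. The key point, and really the only subtle step, is that uniqueness of the finest partition means every partition with zero block sums mod $p$ is a coarsening of $\mathcal T_0$. This holds because the meet of two such partitions again has zero block sums: each block of the meet is an intersection $T\cap T'$, and its sum is zero because of the structure that makes a finest partition unique. I would take this as the meaning of "primitive mod $p$", as the paper implicitly does. Hence $\mathcal T$ coarsens $\mathcal T_0$. Every block of $\mathcal T$ is then a union of blocks of $\mathcal T_0$, each with integer coefficient sum zero, so $\mathcal T$ satisfies \Cref{align:coeff_partition2} over $\Z$.

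Finally, the partition of $[s]$ induced by the points $x_i=(t_i,\dots,t_i^{s-1})$ of $\Gamma$ is exactly $\mathcal T$, since $t\mapsto (t,t^2,\dots,t^{s-1})$ is injective. The solution is therefore trivial, which shows $\Gamma$ is $L$-free. I expect the main obstacle to be this coarsening step: making precise that "unique finest partition" forces every zero-sum partition mod $p$ to coarsen the coefficient partition, so that triviality over $\F_p$ (what the Vandermonde argument gives) transfers to triviality in the integer sense used in the definition.
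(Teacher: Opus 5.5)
Your proof is correct and follows the paper's argument: group equal $t_i$ into blocks, read the moment equations as a square Vandermonde system in the block sums so that they vanish mod $p$, and then invoke the choice of $p$. On the step you flagged, the meet-based justification is not really an argument as written; the claim follows from finiteness instead, since every zero-sum partition mod $p$ coarsens some minimal one, and by uniqueness that minimal one is $\mathcal T_0$. In fact, because the paper defines triviality in $G=\F_p^{s-1}$ using block sums mod $p$, the Vandermonde step alone already gives $L$-freeness of $\Gamma$, and the transfer to $\Z$ is only needed later, in \Cref{lemma:project_to_integers}.
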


\begin{proof}
    Suppose there is a solution; that is, there exist $t_1,\dots, t_s \in \F_p$ such that 
\[\sum_{i = 1}^s c_i t_i^k = 0, \quad \text{for all } 0 \leq k \leq s-1.\]
For $k=0$, we used the translation invariance of $L$ and the convention that $0^0 = 1$. Let $\mathcal T$ be the partition of $[s]$ such that $i,j \in [s]$ belong to the same block of $\mathcal T$ iff $t_i = t_j$. For each $T \in \mathcal T$, let $c_T = \sum_{i \in T} c_i$ and $t_T = t_i$, where $i \in T$, so that we get 
\[\sum_{T \in \mathcal T} c_T t_T^k = 0 \mod p, \quad \text{for all } 0 \leq k \leq s-1.\]
Since the $t_T$ are distinct and $|\mathcal T| \leq s$, this Vandermonde system only has the trivial solution $c_T \equiv 0$ mod $p$ for all $T \in \mathcal T$. By the choice of $p$, this shows that our original solution to $L=0$ is trivial. 
\end{proof}

The moment-curve construction can be improved by taking its projective completion and adding the point at infinity. 

\begin{theorem}
Let $p$ be a prime such that $L$ has the same coefficient partition over $\F_p$ and over $\Z$. Then there is an $L$-free set in $\F_p^{s-1}$ of size $p+1$. 
\end{theorem}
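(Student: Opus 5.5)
The plan is to realize the full projective rational normal curve of $\mathbb P^{s-1}(\F_p)$, all $p+1$ of its points, inside an affine chart. That chart will be the complement of a hyperplane that misses the curve entirely. $L$-freeness then follows from the same Vandermonde argument as in \Cref{proposition:usual_moment_curve}, now including the point at infinity.

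\emph{Choosing the hyperplane.} For $(u:v) \in \mathbb P^1(\F_p)$ let
\[\nu(u,v) = (u^{s-1}, u^{s-2}v, \dots, v^{s-1}) \in \F_p^{s}.\]
A linear functional $\lambda$ on $\F_p^s$ pulls back along $\nu$ to a binary form $q(u,v)$ of degree $s-1$, and every such form arises this way. Since $s-1 \geq 2$, there is a monic irreducible polynomial $g \in \F_p[t]$ of degree $s-1$. Its homogenization $q(u,v) = v^{s-1} g(u/v)$ has no zeros on $\mathbb P^1(\F_p)$:
\begin{itemize}
\item at points with $v \neq 0$, because $g$ has no roots in $\F_p$;
\item at $(1:0)$, because $q(1,0)=1$ is the leading coefficient of $g$.
\end{itemize}
Let $\lambda$ be the corresponding functional, and set
\[A = \{\nu(u,v)/q(u,v) : (u:v) \in \mathbb P^1(\F_p)\}.\]
These vectors are well defined, since $\nu$ and $q$ are both homogeneous of degree $s-1$. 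They are pairwise distinct, because distinct points of $\mathbb P^1$ give non-proportional vectors $\nu$. Each lies on the affine hyperplane $H = \{\lambda = 1\}$, and $|A| = p+1$.

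Choose an affine isomorphism $\phi : H \to \F_p^{s-1}$. Because $L$ is translation invariant ($\sum c_i = 0$), a tuple in $H^s$ satisfies $L=0$ if and only if its image under $\phi$ does. The partition pattern of a tuple is also preserved, since $\phi$ is a bijection. So it suffices to show that $A \subseteq \F_p^s$ is $L$-free.

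\emph{Freeness.} Suppose $P_1,\dots,P_s \in A$ satisfy $\sum_i c_i P_i = 0$. Let $\mathcal T$ be the partition of $[s]$ by equal points, and for $T \in \mathcal T$ set $c_T = \sum_{i\in T} c_i$. Then
\[\sum_{T \in \mathcal T} \frac{c_T}{q_T}\, \nu_T = 0,\]
where $\nu_T = \nu(u_T,v_T)$ for distinct points $(u_T:v_T) \in \mathbb P^1(\F_p)$ and $q_T = q(u_T,v_T) \neq 0$.

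The key step, and the only real check, is that any $k \leq s$ distinct points of the rational normal curve are linearly independent in $\F_p^s$. To see this, scale each $\nu_T$ so that it is either $(1,t,\dots,t^{s-1})$ with $t \in \F_p$, or the infinity point $(0,\dots,0,1)$. If all the points are finite, independence is the Vandermonde argument of \Cref{proposition:usual_moment_curve}. If the infinity point occurs, the remaining points number at most $s-1$; deleting the last coordinate leaves a Vandermonde system of size at most $s-1$ for them, which has full rank. The infinity point is then independent from them because it is the only vector involved with nonzero last coordinate relative to that reduced system.

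Hence $c_T/q_T = 0$, i.e.\ $c_T \equiv 0 \pmod p$, for every $T \in \mathcal T$. By the hypothesis that $L$ has the same coefficient partition over $\F_p$ as over $\Z$, this means $c_T = 0$ for every block, so the solution is trivial in the sense of the paper. I expect the main subtlety to be this independence check involving the point at infinity, together with the choice of an irreducible $q$ so that all $p+1$ points fit into one affine chart.
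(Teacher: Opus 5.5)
Your proposal is correct and is essentially the paper's proof. The paper also takes the projective moment curve $\{(1,t,\dots,t^{s-1})\}\cup\{(0,\dots,0,1)\}$ and divides by an irreducible degree-$(s-1)$ polynomial $f$ (your $q$ in the chart $u\neq 0$ is the reciprocal polynomial of $g$), which puts all $p+1$ points on one affine hyperplane; it then gets $L$-freeness from the linear independence of any $s$ points. You additionally spell out details the paper leaves implicit: well-definedness and distinctness of the points, the affine identification via translation invariance, and the independence check at infinity.
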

\begin{proof}
Let 
\[\overline \Gamma = \{(1,t,\dots,t^{s-1}) : t \in \F_p\} \cup \{(0,\dots,0,1)\}\subseteq \F_p^s.\]
Again, using the invertibility of Vandermonde matrices, it follows that every set of at most $s$ elements of $\overline \Gamma$ is linearly independent. Let 
\[f(x) = a_0 + a_1 x + \dots + a_{s-1} x^{s-1} \in \F_p[x]\]
be an irreducible polynomial over $\F_p$ of degree $s-1$ and consider the set 
\[S = \{(1,t,\dots,t^{s-1})/f(t) : t \in \F_p\} \cup \{(0,\dots,0,1)/a_{s-1}\}.\]
By definition, $S$ lies in the affine hyperplane $H\subseteq \F_p^s$ given by $\sum_{i =0}^{s-1} a_{i} x_i = 1$ and still has the property that every subset of at most $s$ elements is linearly independent. Much like in the proof of \Cref{proposition:usual_moment_curve}, the latter property implies that $S$ is $L$-free. Identifying $H$ with $\F_p^{s-1}$ proves the theorem.
\end{proof}

For $3 \leq s \leq p$, a theorem of Ball \cite{BallArcs} states that there are no $p+2$ points in $\F_p^s$ with the above property and that, up to a change of basis, $\overline \Gamma$ is the unique such set with $p+1$ elements. This implies the prime field case of the famous MDS conjecture, which first appeared as a question by Segre in \cite{segre1955curve}.

\medskip

Let $L$ be primitive and let $p$ be a prime for which $L$ has the same coefficient partition over $\F_p$ and over $\Z$. Together with \Cref{lemma:project_to_integers}, this theorem implies 
\[r_L(N) = \Omega_{L}\left( N^{\frac{\log(p+1)}{(s-1)\log p}}\right),\]
so, in particular, \Cref{theorem:proj_compl_moment} follows. 

\subsection{The moment curve in one dimension lower}

Ruzsa's parabola construction shows that sometimes the moment curve in $\F_p^{s-2}$ is already $L$-free, which gives the lower bound $\Omega(N^{1/(s-2)})$ for primitive $L$. Recall that the parabola mod $p$ is $L_4$-free for $L_4(x,y,z,w) = ax + by - cz - dw$ whenever $abcd$ is not a square mod $p$. There is a direct generalization of this sufficient condition for even $s$. 

\begin{theorem}
    Let $s$ be even and let $p$ be a prime such that $L$ has the same coefficient partition over $\F_p$ and over $\Z$. Assume that 
    \begin{align}\label{align:product_cond}
        (-1)^{s/2} \prod_{i = 1}^s c_i
    \end{align}
    is not a quadratic residue mod $p$. Then $\Gamma = \left\{(t,t^2, \dots, t^{s-2}) : t \in \F_p\right\} \subseteq \F_p^{s-2}$ is $L$-free.
\end{theorem}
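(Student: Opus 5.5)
The plan is to follow the proof of \Cref{proposition:usual_moment_curve}, but with one fewer power, so that the Vandermonde system now has $s-1$ equations instead of $s$. Suppose $t_1,\dots,t_s \in \F_p$ satisfy $\sum_{i=1}^s c_i t_i^k = 0$ for all $0 \le k \le s-2$; the case $k=0$ comes from translation invariance together with the convention $0^0=1$. As before, let $\mathcal T$ be the partition of $[s]$ by equal values, with merged coefficients $c_T$ and distinct values $t_T$. This gives $\sum_{T\in\mathcal T} c_T t_T^k = 0$ for $0\le k\le s-2$.

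I will split into two cases according to the number of blocks. If $|\mathcal T| \le s-1$, the square Vandermonde subsystem consisting of the first $|\mathcal T|$ equations is invertible. Hence $c_T \equiv 0 \pmod p$ for every $T$. Since $L$ has the same coefficient partition mod $p$ as over $\Z$, the solution is trivial, exactly as in \Cref{proposition:usual_moment_curve}.

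The remaining case, which is the heart of the argument, is $|\mathcal T| = s$, i.e.\ all $t_i$ are distinct. Now the vector $(c_1,\dots,c_s)$ lies in the kernel of the $(s-1)\times s$ Vandermonde matrix $(t_i^k)_{0\le k\le s-2,\,1\le i\le s}$. This matrix has full rank $s-1$, so its kernel is one-dimensional. I claim the kernel is spanned by $v_i = 1/\prod_{j\ne i}(t_i-t_j)$. This is the classical identity $\sum_i t_i^k/\prod_{j\neq i}(t_i-t_j) = 0$ for $k<s-1$: it is the leading coefficient of the Lagrange interpolation polynomial of $x^k$ through the $s$ nodes $t_1,\dots,t_s$, and that coefficient vanishes because $\deg x^k < s-1$. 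Hence $c_i = \lambda v_i$ for some $\lambda\in\F_p$.

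Moreover $\lambda\neq 0$, since the singletons are not blocks of a zero-sum partition and so each $c_i \not\equiv 0$. Therefore
\[
\prod_{i=1}^s c_i = \frac{\lambda^s}{\prod_{i}\prod_{j\ne i}(t_i-t_j)} = \frac{\lambda^s}{(-1)^{s(s-1)/2}\Delta^2},\qquad \Delta=\prod_{i<j}(t_i-t_j).
\]
Writing $s=2m$, we have $s(s-1)/2 = m(2m-1) \equiv m \pmod 2$, so $(-1)^{s(s-1)/2} = (-1)^{s/2}$. This gives
\[
(-1)^{s/2}\prod_{i=1}^s c_i = \left(\frac{\lambda^{s/2}}{\Delta}\right)^2,
\]
a nonzero square mod $p$. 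That contradicts the assumption \Cref{align:product_cond}, so no nontrivial solution exists and $\Gamma$ is $L$-free.

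I expect the main obstacle to be identifying the kernel vector explicitly and tracking the sign. The cleanest route is the divided-difference identity above. An alternative is Cramer's rule: the kernel of an $(s-1)\times s$ Vandermonde matrix is given by its signed maximal minors, which are smaller Vandermonde determinants, and their quotients reproduce $v_i$ up to a common scalar.
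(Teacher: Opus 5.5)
Your proof is correct and follows the paper's argument. The non-distinct case is handled by the invertible Vandermonde subsystem as in \Cref{proposition:usual_moment_curve}, and in the distinct case the one-dimensional kernel forces $c_i=\lambda\prod_{j\neq i}(t_i-t_j)^{-1}$, whose product makes $(-1)^{s/2}\prod_i c_i$ a nonzero square. You also supply two details the paper only asserts: the Lagrange-interpolation identity for the kernel vector, and $\lambda\neq 0$ (if some $c_i\equiv 0 \pmod p$, splitting $\{i\}$ off its block would give a zero-sum partition mod $p$ strictly finer than the integer coefficient partition, contradicting the hypothesis).
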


\begin{proof}
    Suppose there is a solution; that is, there exist $t_1,\dots, t_s \in \F_p$ with 
    \begin{align}\label{align:reducedVandermonde}
        \sum_{i = 1}^s c_i t_i^k = 0, \quad \text{for all } 0 \leq k \leq s-2.
    \end{align}
    If $t_1,\dots,t_s$ are not pairwise distinct, then the same argument as in \Cref{proposition:usual_moment_curve} shows that the solution is trivial, so let us assume that $t_1,\dots,t_s$ are pairwise distinct. Inspecting the one-dimensional kernel of the system in \Cref{align:reducedVandermonde} shows that
    \[c_i = \lambda\left(\prod_{j \neq i} (t_i-t_j)\right)^{-1},\]
    for some $\lambda \in \F_p \setminus \{0\}$. Taking the product over $i \in [s]$ gives 
    \[\prod_{i =1}^s c_i =\lambda^s (-1)^{s \choose 2} \prod_{i < j} (t_i-t_j)^{-2},\]
    which contradicts the hypothesis on \Cref{align:product_cond} since $s$ is even. 
\end{proof}

For every nonsquare integer $n$, there are infinitely many primes $p$ such that $n$ is not a square in $\F_p$. Thus \Cref{theorem:reduced_moment_even_s} follows, that is, if the quantity in \Cref{align:product_cond} is not a square and $L$ is primitive, then 
\[r_L(N) = \Omega_{L}( N^{\frac1{s-2}}).\]

\section*{Computational and AI Assistance}
For the computational searches and numerical experiments, the authors used AlphaEvolve, an evolutionary coding agent powered by Google DeepMind's Gemini models \cite{novikov2025alphaevolve}. The authors also used OpenAI's ChatGPT, primarily for assistance with literature searches, checking calculations, and proofreading the paper. The mathematical ideas, methods, and proofs are due to the authors. The authors independently verified all computer-generated examples and certificates used in the paper and take full responsibility for the mathematical content.

\section*{Acknowledgements}

The second author's research was supported in part by an NSERC Discovery Grant and by the National Research, Development, and Innovation Office of Hungary, NKFIH, Grant No.~KKP133819 and Excellence 151341. We thank Yuval Wigderson for carefully reading the manuscript and for helpful comments. Shortly before submission of this paper, we learned that Bhavik Mehta independently obtained a similar lower bound as the one in \Cref{theorem:3x+y=2z+2w}.

\printbibliography

@article{BallArcs,
  author       = {Ball, S.},
  title        = {On sets of vectors of a finite vector space in which every subset of basis size is a basis},
  journal      = {Journal of the European Mathematical Society},
  year         = {2012},
  volume       = {14},
  number       = {3},
  pages        = {733--748},
  doi          = {10.4171/jems/316}
}

@article{Behrend,
  author       = {Behrend, F. A.},
  title        = {On sets of integers which contain no three terms in arithmetical progression},
  journal      = {Proceedings of the National Academy of Sciences of the United States of America},
  year         = {1946},
  volume       = {32},
  number       = {12},
  pages        = {331--332},
  doi          = {10.1073/pnas.32.12.331}
}

@book{algebraicComplTheory,
  author       = {B{\"u}rgisser, P. and Clausen, M. and Shokrollahi, M. A.},
  title        = {Algebraic complexity theory},
  year         = {1997},
  series       = {Grundlehren der mathematischen Wissenschaften},
  volume       = {315},
  publisher    = {Springer-Verlag},
  location     = {Berlin and Heidelberg},
  isbn         = {978-3-540-60582-9},
  doi          = {10.1007/978-3-662-03338-8}
}

@inproceedings{degenerations_corners,
  author       = {Christandl, M. and Fawzi, O. and Ta, H. and Zuiddam, J.},
  title        = {Larger corner-free sets from combinatorial degenerations},
  booktitle    = {13th Innovations in Theoretical Computer Science Conference ({ITCS} 2022)},
  year         = {2022},
  pages        = {48:1--48:20},
  doi          = {10.4230/LIPIcs.ITCS.2022.48}
}

@misc{improving_behrend,
  author       = {Elsholtz, C. and Hunter, Z. and Proske, L. and Sauermann, L.},
  title        = {Improving {Behrend}'s construction: Sets without arithmetic progressions in integers and over finite fields},
  year         = {2024},
  eprint       = {2406.12290},
  eprinttype   = {arxiv},
  eprintclass  = {math.NT}
}

@misc{frantzikinakis,
  author       = {Frantzikinakis, N.},
  title        = {Multiple ergodic averages for three polynomials and applications},
  year         = {2007},
  eprint       = {math/0606567},
  eprinttype   = {arxiv},
  eprintclass  = {math.DS}
}

@online{green100problems,
  author       = {Green, B.},
  title        = {100 open problems},
  year         = {2026},
  url          = {https://people.maths.ox.ac.uk/greenbj/papers/open-problems.pdf},
  urldate      = {2026-09-11}
}

@misc{beatingProducts,
  author       = {Hametner, P. and Tyrrell, F.},
  title        = {Beating product constructions for linear equations over finite fields},
  year         = {2026},
  eprint       = {2606.12194},
  eprinttype   = {arxiv},
  eprintclass  = {math.CO}
}

@misc{karoli_solymosi_peeling,
  author       = {K\'arolyi, G. and Solymosi, J.},
  title        = {A {Salem--Spencer}-type construction for large subsets of integer grids with no isosceles right triangles},
  year         = {2026},
  eprint       = {2607.22828},
  eprinttype   = {arxiv},
  eprintclass  = {math.CO}
}

@article{novikov2025alphaevolve,
  title   = {{AlphaEvolve}: A coding agent for scientific and algorithmic discovery},
  author  = {Novikov, Alexander and V{\~u}, Ng{\^a}n and Eisenberger, Marvin and Dupont, Emilien and Huang, Po-Sen and Wagner, Adam Zsolt and Shirobokov, Sergey and Kozlovskii, Borislav and Ruiz, Francisco J. R. and Mehrabian, Abbas and Kumar, M. Pawan},
  journal = {arXiv preprint arXiv:2506.13131},
  year    = {2025}
}

@article{RuzsaSolving,
  author       = {Ruzsa, I. Z.},
  title        = {Solving a linear equation in a set of integers. {I}},
  journal      = {Acta Arithmetica},
  year         = {1993},
  volume       = {65},
  number       = {3},
  pages        = {259--282},
  doi          = {10.4064/aa-65-3-259-282}
}

@article{salem1942sets,
  author       = {Salem, R. and Spencer, D. C.},
  title        = {On sets of integers which contain no three terms in arithmetical progression},
  journal      = {Proceedings of the National Academy of Sciences of the United States of America},
  year         = {1942},
  volume       = {28},
  number       = {12},
  pages        = {561--563}
}

@article{segre1955curve,
  author       = {Segre, B.},
  title        = {Curve razionali normali e {$k$}-archi negli spazi finiti},
  journal      = {Annali di Matematica Pura ed Applicata},
  year         = {1955},
  volume       = {39},
  pages        = {357--379}
}

@article{Strassen1987,
  author       = {Strassen, V.},
  title        = {Relative bilinear complexity and matrix multiplication},
  journal      = {Journal f{\"u}r die reine und angewandte Mathematik},
  year         = {1987},
  volume       = {375/376},
  pages        = {406--443},
  doi          = {10.1515/crll.1987.375-376.406}
}
\end{document}